\documentclass[11pt]{article}
\usepackage{amsmath,amsthm,amssymb,mathtools}
\usepackage{color}

\usepackage[hyperfootnotes=false]{hyperref}

\usepackage{pxfonts}

\usepackage{titlesec}
\titleformat{\paragraph}
{\normalfont\normalsize\bfseries}{\theparagraph}{1em}{}
\titlespacing*{\paragraph}
{0pt}{3.25ex plus 1ex minus .2ex}{1.5ex plus .2ex}

\def\titlerunning#1{\gdef\titrun{#1}}
\makeatletter
\def\author#1{\gdef\autrun{\def\and{\unskip, }#1}\gdef\@author{#1}}
\def\address#1{{\def\and{\\\hspace*{18pt}}\renewcommand{\thefootnote}{}%
\footnote {#1}}%
\markboth{\autrun}{\titrun}}
\makeatother
\def\email#1{\hspace*{4pt}{\em e-mail}: #1}
\def\MSC#1{{\renewcommand{\thefootnote}{}%
\footnote{\emph{Mathematics Subject Classification (2020):} #1}}}
\def\keywords#1{\par\medskip
\noindent\textbf{Keywords:} #1}

\newtheorem{theorem}{Theorem}[section]
\newtheorem{prop}[theorem]{Proposition}
\newtheorem{cor}[theorem]{Corollary}
\newtheorem{lemma}[theorem]{Lemma}

\theoremstyle{definition}

\newtheorem{remark}[theorem]{Remark}

\numberwithin{equation}{section}

\newcommand{\floor}[1]{\left\lfloor #1 \right\rfloor}

\def\0{\mathbf 0}
\def\w{\omega}

\def\cC{\mathcal C}

\def\cE{\mathcal E}

\def\cG{\mathcal G}
\def\cH{\mathcal H}

\def\cO{\mathcal O}

\def\cQ{\mathcal Q}

\def\cV{\mathcal V}

\def\cW{\mathcal W}
\def\cX{\mathcal X}
\def\cY{\mathcal Y}

\def\U{\mathrm U}

\def\AG{{\rm AG}}
\def\PG{{\rm PG}}

\def\F{{\mathbb F}}

\def\PGL{{\rm PGL}}

\def\U{{\rm U}}

\def\diag{{\rm diag}}
\def\rk{{\rm rk}}

\begin{document}


\baselineskip=16pt


\titlerunning{}

\title{$(r, s)$-sets from Desarguesian ovoids}

\author{Francesco Pavese}

\date{}

\maketitle

\address{F. Pavese: Dipartimento di Meccanica, Matematica e Management, Politecnico di Bari, Via Orabona 4, 70125 Bari, Italy; \email{francesco.pavese@poliba.it}
}

\bigskip

\MSC{Primary 51E20; Secondary 51E21; 51E22; 94B05.}


\begin{abstract}
An {\em $(r, s)$-set} in $\PG(n, q)$ is a set of points, say $\cX$, such that each $s$-dimensional projective subspace contains at most $r$ points of $\cX$. We investigate $(n, n-2)$-sets and $(n-2, n-3)$-sets in $\PG(n, q)$, $n \le 6$. We show that the trivial upper bounds on $(n, n-2)$-sets in $\PG(n, q)$, $4 \le n \le 6$, $(4, 3)$-sets in $\PG(6, q)$ and $(3, 2)$-sets in $\PG(5, q)$ are essentially sharp. A $(3, 2)$-set in $\PG(13, q)$ of size $\frac{q^6-1}{q-1}$ is also constructed.

\keywords{finite geometry; points in general position; $(r, s)$-set; evasive set.}
\end{abstract}

\section{Introduction}

Let $q$ be a prime power and let $\PG(n, q)$ or $\AG(n, q)$ denote the $n$-dimensional projective or affine space over the finite field $\F_q$. A {\em cap} is a set of points in $\PG(n, q)$ such that at most two of them are on a line, whereas a set of points in $\PG(n, q)$ such that at most $n$ in a hyperplane is known as an {\em arc}. These objects have been extensively studied due to their connections to coding theory; see, for instance, \cite{HS}. More generally, following \cite{H1983}, a pointset $\cX$ in $\PG(n, q)$ is called {\em $(|\cX|; r, s, n, q)$-set} or simply {\em $(r, s)$-set} if the properties below are satisfied:
\begin{itemize}
\item[{\em i)}] each $s$-dimensional projective subspace contains at most $r$ points of $\cX$;
\item[{\em ii)}] $\cX$ spans the whole $\PG(n, q)$;
\item[{\em iii)}] there is an $(s+1)$-dimensional projective subspace containing $r+2$ points of $\cX$.
\end{itemize}
The term {\em $(r+2)$-general set}, $1 \le r \le n-1$, is also used to denote a $(r+1, r)$-set, see \cite{Bennett, TW, P}. Indeed, $\cX$ is an $(r+2)$-general set if any $r+2$ distinct points of $\cX$ are in general position. Hence a cap is a $3$-general set and an arc is an $(n+1)$-general set. An $(r, s)$-set can be similarly defined in $\AG(n, q)$. In particular, an $(r, s)$-set of $\AG(n, q)$ is also an $(r, s)$-set of $\PG(n, q)$. On the other hand, $(r, s)$-sets in $\AG(n, q)$ are also known as {\em $(s, r)$-subspace evasive sets} \cite{ST}. Recently these sets gained renewed interest due to their connections with bipartite Ramsey graphs \cite{PR} and list-decodable codes \cite{Gu}. Nonetheless, few explicit constructions of large $(r, s)$-sets are known in literature. In \cite{DL} the authors showed that, if $q$ is large enough, there exists an $(n^s, s)$-set in $\AG(n, q)$ of size $q^{n-s}$, whereas in \cite{IV} an $(n,1)$-set of size $(q-1)^{n-1}$ in $\PG(n, q)$ is exhibited. Further results can be found in \cite{BS, VV}. 

In this paper, we focus on $(n, n-2)$-sets in $\PG(n, q)$, $n = 4, 5, 6$, and on $(n-2, n-3)$-sets, i.e., $(n-1)$-general sets, in $\PG(n, q)$, $n = 5, 6$. A trivial upper bound shows that in $\PG(n, q)$ the size of an $(n, n-2)$-set cannot exceed $\left(n!\right)^{\frac{1}{n-1}} q^2 + O(q)$, whereas a $(4, 3)$-set and a $(3, 2)$-set, i.e., $5$-general set and $4$-general set, have size at most $\sqrt{2} q^{\frac{n-2}{2}} + O(q^{\frac{n-4}{2}})$ and $\sqrt{2} q^{\frac{n-1}{2}} + O(q^{\frac{n-3}{2}})$, respectively. Here we show the existence of a transitive set of size $q^2-q+1$ in $\AG(6, q)$ that is a $(6,4)$-set and a $(4, 3)$-set. Such a set is obtained by considering a suitable hyperplane section of the so-called {\em Desarguesian (partial) ovoid} of $\PG(7, q)$. By projecting an $(r, s)$-set of $\PG(n, q)$ from one of its points, an $(r-1, s-1)$-set of $\PG(n-1, q)$ arises. It follows that the trivial upper bounds on $(n, n-2)$-sets in $\PG(n, q)$, $4 \le n \le 6$, $(4, 3)$-sets in $\PG(6, q)$ and $(3, 2)$-sets in $\PG(5, q)$ are essentially sharp. To the best of our knowledge, besides a $4$-general set in $\PG(5, q)$ of size $q^2+1$ described by Cooperstein in \cite[Theorem~7.7]{Co}, no instances of $(r, s)$-sets with these cardinalities were previously known in the literature. Our main results are summarized in the following:

\begin{theorem}
\begin{enumerate}
\item[i)] In $\AG(6, q)$, $q \ge 4$, there exists a transitive set of size $q^2-q+1$ that is a $(4, 3)$-set and a $(6, 4)$-set.
\item[ii)] In $\AG(5, q)$, $q \ge 4$, there exists a set of size $q^2-q$ that is a $(3, 2)$-set and a $(5, 3)$-set.
\item[iii)] In $\PG(5, q)$, $q \ge 4$, there exists a set of size $q^2-q+2$ that is a $(3, 2)$-set.
\item[iv)] In $\PG(13, q)$ there exists a transitive set of size $\frac{q^6-1}{q-1}$ that is a $(3, 2)$-set.
\end{enumerate}
\end{theorem}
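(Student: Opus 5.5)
We build everything from the Desarguesian ovoid. Identify $\F_{q^6}$ with $\F_q^6$ and consider the field-reduction picture. Take $V=\F_{q^3}^2$, regarded as an $\F_q$-space of dimension $6$; alternatively, work inside $\PG(7,q)$ with the Desarguesian partial ovoid $\cO=\{\langle (x,x^{q},\dots)\rangle\}$ arising as the image of $\PG(1,q^4)$, or of the norm/trace variety. We fix a concrete model. Let $\cO$ be the set of points $P_x=\langle(1,x,x^{q+1},\dots)\rangle$ obtained from the Veronese-type embedding $x\mapsto (x, x^q, x^{q^2},\dots)$ over $\F_{q^k}$, written in $\F_q$-coordinates via a normal basis. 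Its stabilizer contains a cyclic Singer-like group acting regularly, which gives the transitivity claims.

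\textbf{Part iv).} Consider the points $\langle x\rangle$ of $\PG(\F_{q^6}\otimes\ldots)$ given by $x\mapsto \langle (x, x^{q^a}, x^{q^b})\rangle$ in $\F_{q^6}^3$ viewed over $\F_q$, with suitable exponents, modulo $\F_q^*$. This gives $\frac{q^6-1}{q-1}$ points in $\PG(17,q)$. We then cut down to $\PG(13,q)$ by choosing the map $x\mapsto (x, \mathrm{Tr}\text{-part of } x^{q^i+1})$ so that the ambient dimension is $14$. Four points being in general position amounts to the nonvanishing of a $4$-term relation $\sum\lambda_i (x_i, x_i^{\sigma+1})=0$ with $\lambda_i\in\F_q$. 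From the first coordinate we get $\sum\lambda_i x_i=0$. We then use a $q$-polynomial/Moore-determinant argument: the second coordinate forces a Moore-type determinant to vanish, which is impossible for $\F_q$-independent triples. The multiplicative group $\F_{q^6}^*$ acts by $x\mapsto cx$, which gives transitivity.

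\textbf{Part i).} We take $\cO\subset\PG(7,q)$ with $|\cO|=q^3+1$ (an ovoid of $\cQ^+(7,q)$ from $\PG(1,q^3)$ via the trace-norm model), and intersect with a hyperplane $\Pi$ chosen as the polar of a point so that $|\cO\cap\Pi|=q^2-q+1$. This is the orbit of the order-$(q^2-q+1)$ subgroup of the cyclic group of order $q^3+1$. The $(4,3)$ and $(6,4)$ properties are to be shown as follows. First, any $5$ points of $\cO$ span a $\PG(4,q)$; we prove this by reducing to the nonvanishing of a Vandermonde-like determinant in $x_i^{q}$, $x_i$ over $\F_{q^3}$. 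Second, any $\PG(4,q)$ meets the section in at most $6$ points, via a similar counting of solutions of a polynomial of degree $\le 6$ in $\F_{q^6}$ restricted to the norm-$1$ subgroup. To place the set in $\AG(6,q)$, we choose a further hyperplane of $\Pi$ disjoint from the section. This hyperplane exists because $q\ge4$: a counting argument on the hyperplanes of $\Pi$ shows that not all of them meet the set.

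\textbf{Parts ii), iii).} For ii) we project from a point of the set of part i). Projection maps a $(4,3)$-set to a $(3,2)$-set and a $(6,4)$-set to a $(5,3)$-set of size $q^2-q$, and the affine part is preserved if the center lies off the chosen hyperplane at infinity. For iii) we adjoin two suitable points, for instance the projection of the tangent direction at the center together with one point at infinity, and then verify the $4$-general condition directly.

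\textbf{Main obstacle.} The main obstacle is the algebraic verification in i) that no $\PG(4,q)$ contains $7$ points, which requires bounding the roots of the associated polynomials on the norm-$1$ group. We would try first to reduce this to a linearized-polynomial root count.
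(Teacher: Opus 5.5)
Your argument for part i) rests on a false claim: $\cO$ is \emph{not} $5$-general. It contains $q^2(q^4+q^2+1)$ twisted cubics, the images of the $\F_q$-sublines of $\PG(1,q^3)$. Each has $q+1\ge 5$ points in a solid, so no Vandermonde-type computation over all of $\cO$ can show that five of its points span a $\PG(4,q)$. The $(4,3)$ and $(6,4)$ properties hold only for the section, and they must come from how that particular hyperplane meets these cubics. Three ingredients are missing:
(a) five points of $\cO$ in a solid always lie on one of these cubics;
(b) seven points of $\cO$ in a $\PG(4,q)$ force four of them onto one. To see this, normalize three of them to $t=0,\infty,1$ and project from their plane; the rank condition then reduces to some $t_i\in\F_q$ or $\{t_1,t_2;t_3,t_4\}^q=\{t_1,t_2;t_3,t_4\}$;
(c) a hyperplane containing such a cubic meets $\cO$ in $q^2+1$ or $q^2+q+1$ points (via a quadric of $\PG(3,q)$), so a hyperplane with $q^2-q+1$ points meets every cubic in at most three points.
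Your ``polynomial of degree $\le 6$ on the norm-one group'' is not substantiated, and you leave it open.

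The affine claims are also unsupported. No counting argument yields a hyperplane of $\PG(6,q)$ skew to a set of size $q^2-q+1$: a line already meets every hyperplane, and $(q^2-q+1)\frac{q^6-1}{q-1}>\frac{q^7-1}{q-1}$. You need structure, such as the $T$-invariant hyperplane $a=0$ of $H_2$, or $Q^\perp\cap H_1$ for $Q\in\cO\setminus H_1$, using $Q^\perp\cap\cO=\{Q\}$. In ii), the center $P$ lies off the hyperplane at infinity automatically, and this is irrelevant. The projection of $\cX\setminus\{P\}$ from $P$ is affine if and only if some hyperplane through $P$ meets $\cX$ only in $P$; the paper gets this from $P^\perp\cap\cX=\{P\}$.

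Part iii) as written has no content, since choosing the two points and checking $4$-generality is the whole problem. What makes it work in the paper is a line $\PG(V)$ of $\Pi$, the kernel of $X^{q^2}-X^q+X$, which is disjoint from $\cY$. Projection from $\PG(V)$ maps $\cY$ bijectively onto $\cE\setminus\cC$, where $\cE$ is an elliptic quadric of $\PG(3,q)$ and $\cC$ is a conic. Hence no point of $\PG(V)$ lies on a secant of $\cY$ or in a plane with three points of $\cY$, since such a plane would project to a line with three points of $\cE$. A plane through two points of $\PG(V)$ projects to a single point, so any two points of $\PG(V)$ can be added.

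In iv) your set is not well defined. The map $x\mapsto(x,x^{\sigma+1})$ is not homogeneous, so it does not descend to $\F_{q^6}^*/\F_q^*$. The map $x\mapsto(x,x^{q^a},x^{q^b})$ is $\F_q$-linear, so its image is just a $\PG(5,q)$. The passage to dimension $14$ is not explained. Moreover, a Moore-determinant argument covers only $\F_q$-independent configurations, while the degenerate ones are exactly where $4$-generality breaks. For instance, for $q\ge 3$, $\{\langle(1,x,x^{q+1})\rangle : x\in\F_{q^6}\}$ has coplanar quadruples with all $x_i$ in one $\F_{q^2}$-line. The paper instead uses the homogeneous cubic map $x\mapsto(x^{q^4+q^2+1},x^{q^3+q+1},x^{q^4+q+1})\in\F_{q^2}\times\F_{q^6}\times\F_{q^6}$. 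It computes the order and semiregularity of $\phi_3$ to get $\frac{q^6-1}{q-1}$ points, and proves the rank condition by splitting according to $\dim\langle z_1,z_2,z_3\rangle_{q^2}\in\{1,2,3\}$.

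Finally, for i) you never verify conditions ii) and iii) of the definition of an $(r,s)$-set, e.g.\ that some $\PG(4,q)$ contains six points of the section. The paper obtains this from Proposition~\ref{upper2} for $q>5$ and a direct check for $q\in\{4,5\}$.
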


In Section~\ref{bb}, simple upper bounds on the size of the largest $(n, n-2)$-sets and $(4, 3)$-sets in $\PG(n, q)$ are discussed. In Section~\ref{pre}, crucial properties of the Desarguesian (partial) ovoid $\cO$ of $\PG(7, q)$ are established. In Section~\ref{pg6}, it is shown that if a hyperplane $\cH$ of $\PG(7, q)$ intersects $\cO$ in $q^2-q+1$ points, then $\cH \cap\cO$ is a $(4, 3)$-set and a $(6, 4)$-set of $\cH$. The set obtained by projecting $\cH \cap \cO$ from one of its points is further investigated in Section~\ref{pg5}. Finally, in Section~\ref{pg13}, a transitive $(3, 2)$-set of $\PG(13, q)$ of size $\frac{q^6-1}{q-1}$ is constructed.


\section{Upper bounds}\label{bb}

In this section, by using a counting argument, upper bounds on the size of the largest $(n, n-2)$-sets and $(4, 3)$-sets in $\PG(n, q)$ are presented. 

\begin{prop}\label{upper1}
Let $\cX$ be an $(n, n-2)$-set of $\PG(n, q)$, then 
\begin{align*}
|\cX| \le \left(\frac{n! (q^{n+1}-1)(q^n-1)}{(q-1)(q^2-1)}\right)^{\frac{1}{n-1}} + n-2.
\end{align*}
\end{prop}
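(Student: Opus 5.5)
Set $x = |\cX|$ and double count pairs $(S, \Pi)$. Here $S$ is an $(n-1)$-subset of $\cX$ spanning an $(n-2)$-dimensional subspace, and $\Pi$ is that subspace. An $(n-2)$-space $\Pi$ contains at most $n$ points of $\cX$, so it contains at most $\binom{n}{n-1} = n$ such subsets $S$. The number of $(n-2)$-spaces of $\PG(n,q)$ is the Gaussian binomial
\[
\frac{(q^{n+1}-1)(q^n-1)}{(q-1)(q^2-1)}.
\]
Hence the number of $(n-1)$-subsets of $\cX$ in general position is at most $n$ times this quantity.

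Next I show that every $(n-1)$-subset of $\cX$ is in general position. Suppose $n-1$ points of $\cX$ spanned a space of dimension at most $n-3$. Add points of $\cX$ one at a time, each time choosing a point outside the current span. This is possible because $\cX$ spans $\PG(n,q)$. Each added point raises the dimension by exactly one, so after at most two additions we reach an $(n-2)$-dimensional subspace. That subspace then contains at least $n+1$ points of $\cX$, contradicting property i). (If the span has dimension $n-3$ we add one point and get $n$ points in an $(n-2)$-space; to reach $n+1$ we extend instead to the $(n-2)$-space spanned by the $n-1$ points plus one extra point, which already has $n$ points, and we need a sharper argument.)

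To avoid the off-by-one issue, I bound more crudely. I count only subsets in general position and bound their number from below. Choose points sequentially: the first point arbitrarily, then each next point outside the span of those already chosen. After $k$ points in general position, $k \le n-2$, their $(k-1)$-dimensional span lies inside some $(n-2)$-space. So it contains at most $n$ points of $\cX$, hence at most $n-k$ points besides the chosen ones; in fact any $(k-1)$-space meets $\cX$ in at most $n$ points. Therefore the number of choices for the next point is at least $x - n$, and more carefully at least $x-(n-2)$ choices remain at each step. This gives at least
\[
\frac{(x-n+2)^{n-1}}{(n-1)!}
\]
unordered general-position $(n-1)$-subsets; the bound is crude but of the right shape. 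Combining with the upper count,
\[
(x-n+2)^{n-1} \le (n-1)!\, n \cdot \frac{(q^{n+1}-1)(q^n-1)}{(q-1)(q^2-1)} = n!\,\frac{(q^{n+1}-1)(q^n-1)}{(q-1)(q^2-1)}.
\]
Taking $(n-1)$-th roots gives exactly the claimed bound, which confirms that this is the intended route.

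The main obstacle is justifying the uniform lower bound of $x-(n-2)$ choices at each step. Let $k\le n-2$ points be chosen, with span $\Sigma$ of dimension $k-1 \le n-3$. I must show $|\Sigma\cap\cX| \le n-2$, or at least that $\Sigma$ excludes all but $n-2$ points of $\cX$.

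I would argue this with the spanning property. Extend $\Sigma$ by points of $\cX$ off it, as above, to an $(n-2)$-space $\Pi \supseteq \Sigma$. Each point added from outside the current span raises the dimension by one. Reaching dimension $n-2$ from $\Sigma$ requires adding $(n-2)-(k-1) = n-1-k \ge 1$ points of $\cX$ not in $\Sigma$. Hence $|\Sigma\cap\cX| + (n-1-k) \le |\Pi\cap\cX| \le n$, which gives $|\Sigma\cap\cX| \le k+1$.

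So the number of excluded points is at most $k+1 \le n-1$. This falls short of the target $n-2$ by one only at the last step, $k=n-2$. I would fix this at the final step by choosing the last point differently: pick the last $n-2$ points and count via a second extension argument. If that is not possible, I would accept a slightly weaker additive constant; the leading term is unaffected.
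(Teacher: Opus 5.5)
Your upper count is fine, but as written the proposal does not prove the stated inequality. Your sequential-choice argument only shows $|\Sigma\cap\cX|\le k+1$. At the last step $k=n-2$ this allows $n-1$ excluded points, so you only get at least $x-n+1$ choices for the final point. The conclusion you actually reach is $x\le (n!\,N)^{1/(n-1)}+n-1$, where $N=\frac{(q^{n+1}-1)(q^n-1)}{(q-1)(q^2-1)}$, not the bound with $+\,n-2$. The ``fix'' you sketch for the last step is never carried out. The remark that taking roots ``gives exactly the claimed bound'' presupposes the lower bound $\frac{(x-n+2)^{n-1}}{(n-1)!}$ on general-position subsets, which you have not established. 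You were right to abandon your first attempt: nothing in property i) prevents, for instance, three points of a $(4,2)$-set of $\PG(4,q)$ from being collinear, so $(n-1)$-subsets need not be in general position.

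The missing idea is that general position is irrelevant. Count pairs $(S,\Pi)$ where $S$ is an $(n-1)$-subset of $\cX$ and $\Pi$ is an $(n-2)$-space \emph{containing} $S$, rather than spanned by it. Any $n-1$ points span a subspace of dimension at most $n-2$, so every $S$ lies in at least one such $\Pi$. Each $\Pi$ still contains at most $n$ points of $\cX$, hence at most $n$ subsets $S$. This gives $\binom{x}{n-1}\le nN$ directly. Since $\binom{x}{n-1}=\frac{x(x-1)\cdots(x-n+2)}{(n-1)!}\ge\frac{(x-n+2)^{n-1}}{(n-1)!}$, the claim follows with the exact constant $n-2$. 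This is the paper's proof, and it needs neither the spanning property ii) nor any extension argument. Your own route can also be rescued for $n\ge 4$. In the first two steps you have exactly $x$ and $x-1$ choices, and $x(x-n+1)\ge (x-n+2)^2$ as soon as $(n-3)x\ge (n-2)^2$. For smaller $x$ the bound is trivial, since then $x-n+2\le 1$. This repair is unnecessary, though, once you count containment instead of span.
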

\begin{proof}
Let $\cX$ be a $(n, n-2)$-set of $\PG(n, q)$. Every subset of size $n-1$ of $\cX$ lies in at least an $(n-2)$-dimensional projective subspace of $\PG(n, q)$. On the other hand, every $(n-2)$-dimensional projective subspace of $\PG(n, q)$ contains at most $n$ points of $\cX$ and hence at most $n$ subsets of $\cX$ of size $(n-1)$. Therefore
\begin{align*}
{|\cX| \choose n-1} \le n \frac{(q^{n+1}-1)(q^n-1)}{(q-1)(q^2-1)}.
\end{align*}
Since 
\begin{align*}
\frac{(|\cX| - n + 2)^{n-1}}{(n-1)!} \le {|\cX| \choose n-1},
\end{align*}
the claim follows. 
\end{proof}

\begin{cor}
An $(n, n-2)$-set of $\PG(n, q)$ has at most $\left(n!\right)^{\frac{1}{n-1}} q^2 + O(q)$ points.
\end{cor}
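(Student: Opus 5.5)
The plan is to read this off Proposition~\ref{upper1} by an asymptotic analysis in $q$, with $n$ fixed. Proposition~\ref{upper1} gives
\begin{align*}
|\cX| \le \left(n!\,\frac{(q^{n+1}-1)(q^n-1)}{(q-1)(q^2-1)}\right)^{\frac{1}{n-1}} + n-2 ,
\end{align*}
so it suffices to show that the right-hand side is $(n!)^{\frac{1}{n-1}} q^2 + O(q)$. The additive constant $n-2$ is already $O(1)$, so only the main term needs attention.

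\textbf{Step 1: size of the Gaussian-type factor.} First note that
\begin{align*}
\frac{(q^{n+1}-1)(q^n-1)}{(q-1)(q^2-1)}
\end{align*}
is the number of $(n-2)$-dimensional subspaces of $\PG(n,q)$. Its degree in $q$ is $(n+1)+n-3 = 2n-2$. Expanding, it equals $q^{2n-2}(1+O(q^{-1}))$: the numerator is $q^{2n+1}(1+O(q^{-n}))$ and the denominator is $q^3(1-q^{-1})(1-q^{-2}) = q^3(1-q^{-1}+O(q^{-2}))$. In fact the quotient equals $q^{2n-2}\bigl(1+q^{-1}+O(q^{-2})\bigr)$, though only the $O(q^{-1})$ error term will be needed.

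\textbf{Step 2: take the $(n-1)$-th root.} The main term becomes
\begin{align*}
(n!)^{\frac{1}{n-1}}\,q^{2}\,\bigl(1+O(q^{-1})\bigr)^{\frac{1}{n-1}} .
\end{align*}
Using $(1+x)^{1/(n-1)} \le 1+\frac{x}{n-1}$ for $x\ge -1$ (Bernoulli), or simply $(1+x)^{1/(n-1)} = 1+O(x)$ as $x\to 0$, this is $(n!)^{\frac{1}{n-1}}q^2 + O(q)$. Adding $n-2 = O(1)$ gives the corollary.

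The only point requiring care is keeping the error uniform, i.e.\ the implied constant depends on $n$ but not on $q$. This holds because $n$ is fixed and all the expansions above are finite, so I expect no real obstacle.
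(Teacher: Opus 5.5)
Your proposal is correct and follows the paper's route: the corollary is read off directly from Proposition~\ref{upper1}. The Gaussian factor there is $q^{2n-2}(1+O(q^{-1}))$, so its $(n-1)$-th root times $(n!)^{\frac{1}{n-1}}$, plus the constant $n-2$, gives $(n!)^{\frac{1}{n-1}}q^2+O(q)$, exactly as you argue.
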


By adding one more point to a cap of size $q^2+1$ of $\PG(3, q)$, a $(3, 1)$-set matching the theoretical upper bound up to a constant factor is obtained. On the other hand, no examples of order $q^2$ are known if $n > 3$.

\begin{prop}\label{upper2}
Let $\cX$ be a $5$-general set of $\PG(n, q)$, then 
\begin{align*}
|\cX| \le \frac{\sqrt{8q^n+q^2-6q+1} + 3q-5}{2(q-1)}.
\end{align*}
\end{prop}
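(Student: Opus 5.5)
The plan is to reduce to a point-counting argument for $4$-general sets in one dimension lower, using the projection trick mentioned in the introduction. Let $\cX$ be a $5$-general set of $\PG(n, q)$ with $x = |\cX|$. Fix a point $P \in \cX$ and project $\cX \setminus \{P\}$ from $P$ onto a hyperplane $\Pi \cong \PG(n-1, q)$ not through $P$. Let $\cY$ be the image, with $y = |\cY|$.

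\textbf{Step 1: $\cY$ is a $4$-general set with $y = x-1$.} Let $R_1, \dots, R_k$ be distinct points of $\cX \setminus \{P\}$ with $k \le 4$. Since $\cX$ is $5$-general, $P, R_1, \dots, R_k$ are in general position, so they span a $k$-dimensional subspace. Intersecting that subspace with $\Pi$ shows that the images $R_1', \dots, R_k'$ span a $(k-1)$-dimensional subspace. The case $k=2$ gives injectivity of the projection, so $y = x-1$. The case $k = 4$ says any $4$ points of $\cY$ are in general position.

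\textbf{Step 2: secant lines of $\cY$ meet only in points of $\cY$.} Take two distinct secant lines $\ell = AB$ and $m = CD$ of $\cY$ in $\Pi$.
\begin{itemize}
\item If $\{A,B\} \cap \{C,D\} = \emptyset$, the four points are in general position and span a solid, so $\ell$ and $m$ are skew.
\item If they share exactly one point, say $A = C$, then $\ell \cap m = \{A\}$, since otherwise $\ell = m$ and $A, B, D$ would be collinear, contradicting $3$-generality.
\end{itemize}
Hence each of the $\binom{y}{2}$ secants carries $q-1$ points outside $\cY$, and all these points are pairwise distinct and distinct from the points of $\cY$.

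\textbf{Step 3: count points of $\Pi$.} This gives
\begin{align*}
y + (q-1)\frac{y(y-1)}{2} \le \frac{q^n-1}{q-1}.
\end{align*}
Substitute $y = x-1$ and use $(x-1)(x-2) = x(x-3) + 2$. A short rearrangement turns the inequality into
\begin{align*}
(q-1)x^2 - (3q-5)x - \frac{2(q^n - q^2 + 3q - 3)}{q-1} \le 0 .
\end{align*}

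\textbf{Step 4: solve the quadratic.} The discriminant is
\begin{align*}
(3q-5)^2 + 8(q^n - q^2 + 3q - 3) = 8q^n + q^2 - 6q + 1 .
\end{align*}
So $x$ is at most the larger root, $\frac{\sqrt{8q^n+q^2-6q+1} + 3q-5}{2(q-1)}$, which is the claimed bound.

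\textbf{Main obstacle.} The only genuinely geometric point is Step 2: one must check that pairwise disjointness of the secants (outside $\cY$) needs only $4$-generality of $\cY$, and that this is exactly what the projection from $P$ supplies. The remainder is routine algebra, whose only purpose is to confirm that the constants match the stated expression.
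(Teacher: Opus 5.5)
Your proof is correct, and it reaches the bound by a different route from the paper.

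\textbf{The paper's route.} The paper stays in $\PG(n,q)$. It counts three kinds of points:
\begin{itemize}
\item the points of $\cX$;
\item the $q-1$ further points on each secant;
\item the $(q-1)^2$ points off the secants on each plane spanned by a triple of $\cX$.
\end{itemize}
It then uses $5$-generality to show that planes through a common point off the secants come from pairwise disjoint triples. Hence such a point lies on at most $\lfloor |\cX|/3\rfloor$ of these planes. Finally it weakens the resulting inequality via $\frac{x(x-1)(x-2)}{6\lfloor x/3\rfloor}\ge\frac{(x-1)(x-2)}{2}$.

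\textbf{Your route.} You project from $P\in\cX$ and count secants of the resulting $4$-general set in $\PG(n-1,q)$. In effect you apply the $4$-general bound of \cite[Proposition~3.1]{P} one dimension lower and add $1$.

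\textbf{Comparison.} The two final inequalities are in fact identical. The paper's weakened inequality
$x+(q-1)\binom{x}{2}+(q-1)^2\binom{x-1}{2}\le\frac{q^{n+1}-1}{q-1}$
is exactly $q$ times your inequality $(x-1)+(q-1)\binom{x-1}{2}\le\frac{q^n-1}{q-1}$, plus $1$. Geometrically, lifting your count back through $P$ amounts to counting the points on the lines $PR$ and on the planes $\langle P,R,S\rangle$ with $R,S\in\cX$.

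\begin{itemize}
\item Your version is shorter. It needs only $4$-generality of the projected set, not the disjoint-triples argument or the floor function.
\item It also shows where the constant comes from: $3q-5=(q-3)+2(q-1)$, i.e.\ the $q-3$ of the $4$-general bound in $\PG(n-1,q)$ shifted by the projection point.
\item The paper's inequality before weakening is marginally stronger when $3\nmid|\cX|$, since the weakening step is an equality precisely when $3\mid|\cX|$.
\end{itemize}
The proposition as stated discards that gain, so your approach loses nothing.
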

\begin{proof}
Let $\cX$ be a $5$-general set of $\PG(n, q)$. A point of $\PG(n, q) \setminus \cX$ lies on at most one line secant to $\cX$. On the other hand, if a point $P$ outside the lines secant to $\cX$ belongs to two planes $\pi_1$, $\pi_2$, spanned by points of $\cX$, where $\pi_i \cap \cX = \{A_i, B_i, C_i\}$, then the two triples $\{A_1, B_1, C_1\}$ and $\{A_2, B_2, C_2\}$ must be disjoint, otherwise the two planes would have a line in common and they would generate a solid containing five points of $\cX$, which is impossible. Thus, through a point $P$ outside the secants there can pass at most $\floor{\frac{|\cX|}{3}}$ planes spanned by points of $\cX$, since the corresponding triples must be pairwise disjoint. It follows that
\begin{align*}
|\cX|+(q-1)\frac{|\cX|(|\cX|-1)}{2} + (q-1)^2 \frac{|\cX|(|\cX|-1)(|\cX|-2)}{6 {\floor{|\cX|/3}}} \leq \frac{q^{n+1}-1}{q-1}.
\end{align*}
Consequently, $\cX$ also satisfies
\begin{align*}
 |\cX|+(q-1)\frac{|\cX|(|\cX|-1)}{2} + (q-1)^2 \frac{(|\cX|-1)(|\cX|-2)}{2} \leq \frac{q^{n+1}-1}{q-1},
\end{align*}
since 
\begin{align*}
\frac{(|\cX|-1)(|\cX|-2)}{2} \le \frac{|\cX|(|\cX|-1)(|\cX|-2)}{6 {\floor{|\cX|/3}}},
\end{align*}
that is 
\begin{align*}
\frac{q(q-1)}{2} |\cX|^2+\frac{q(5-3q)}{2} |\cX| + (q-1)^2 \leq  \frac{q^{n+1}-1}{q-1},
\end{align*}
from which the statement follows.
\end{proof}

\begin{cor}
A $(4, 3)$-set of $\PG(n, q)$ has at most $\sqrt{2} q^{\frac{n-2}{2}} + O(q^{\frac{n-4}{2}})$ points.
\end{cor}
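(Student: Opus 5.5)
The corollary will be read off from the closed-form bound of Proposition~\ref{upper2}, so the plan is a short asymptotic computation. First note that a $(4,3)$-set of $\PG(n,q)$ is the same thing as a $5$-general set. Indeed, any five of its points span a solid (or less), and that solid would then contain five points of $\cX$, which is excluded; hence every five points are in general position. Thus Proposition~\ref{upper2} applies directly and gives
\begin{align*}
|\cX| \le \frac{\sqrt{8q^n+q^2-6q+1} + 3q-5}{2(q-1)}.
\end{align*}

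Next I expand the right-hand side in powers of $q$. For the square root I factor out $\sqrt{8q^n}=2\sqrt2\, q^{n/2}$ and use $\sqrt{1+x}=1+O(x)$ with $x=(q^2-6q+1)/(8q^n)$. This gives $\sqrt{8q^n+q^2-6q+1}=2\sqrt2\,q^{n/2}+O(q^{2-n/2})$. For the denominator I write $\frac{1}{2(q-1)}=\frac{1}{2q}\bigl(1+O(q^{-1})\bigr)$.

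Multiplying out, the square-root term contributes
\begin{align*}
\frac{2\sqrt2\, q^{n/2}}{2q}\bigl(1+O(q^{-1})\bigr)=\sqrt2\, q^{\frac{n-2}{2}}+O(q^{\frac{n-4}{2}}).
\end{align*}
The remaining term is $\frac{3q-5}{2(q-1)}=O(1)$, and $O(1)\subseteq O(q^{\frac{n-4}{2}})$ as long as $n\ge 4$. This is the natural range anyway: a $5$-general set needs at least dimension $4$ to be nontrivial, and for smaller $n$ the statement is only meaningful as a constant bound.

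The only point that needs care is the bookkeeping of the error terms. The relative error $O(q^{-1})$ coming from $q-1$ versus $q$ is the one that determines the $O(q^{\frac{n-4}{2}})$ term. Since $2-n/2-1\le (n-4)/2$ for $n\ge 3$, the correction from the square root is absorbed into the same term.
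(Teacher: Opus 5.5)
Your proposal is correct and follows the paper's route: identify $(4,3)$-sets with $5$-general sets, then read off the leading term $\sqrt{2}\,q^{\frac{n-2}{2}}$ from the closed-form bound of Proposition~\ref{upper2}. For $n\ge 4$, the $\frac{1}{q-1}$-versus-$\frac{1}{q}$ correction and the bounded term $\frac{3q-5}{2(q-1)}$ are absorbed into $O(q^{\frac{n-4}{2}})$; the paper states the corollary without spelling this out, so your bookkeeping just makes the omitted expansion explicit.
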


Note that equality in the bound of Proposition~\ref{upper2} is attained if $(n, q) \in \{(4, 2), (5, 3)\}$. Indeed, a frame in $\PG(4, 2)$ is a $5$-general set of size $6$, whereas it is known that there exists a unique $5$-general set in $\PG(5, 3)$ of size $12$. See also \cite[p. 288]{G}. In these cases, by considering the points of $\cX$ as the columns of a parity check matrix of a code, either the $[6, 1, 6]_2$ repetition code or the $[12, 6, 6]_3$ extended Golay code is obtained. Moreover, the bound is known to be sharp if $n = 4$. Indeed, in this case a $5$-general set is an arc, hence $|\cX| \le q+1$ by \cite[Theorem 6.40]{HT}, and normal rational curves of $\PG(4, q)$ provide instances of such sets. In what follows we show that the bound turns out to be essentially sharp also in the case when $n = 6$. 

By \cite[Proposition~3.1]{P}, the size of a $4$-general set of $\PG(n, q)$ cannot exceed $\frac{\sqrt{8q^{n+1} + q^2 -6q +1} + q - 3}{2(q-1)}$.

\begin{cor}
A $(3, 2)$-set of $\PG(n, q)$ has at most $\sqrt{2} q^{\frac{n-1}{2}} + O(q^{\frac{n-3}{2}})$ points.
\end{cor}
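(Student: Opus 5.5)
The plan is to read the corollary off the bound quoted just before it, \cite[Proposition~3.1]{P}. The key observation is that a $(3,2)$-set of $\PG(n,q)$ is precisely a $4$-general set: every plane contains at most three of its points, so any four of its points are in general position. Hence, for any $(3,2)$-set $\cX$ of $\PG(n,q)$,
\begin{align*}
|\cX| \le \frac{\sqrt{8q^{n+1} + q^2 -6q +1} + q - 3}{2(q-1)}.
\end{align*}
It remains only to expand this expression asymptotically in $q$.

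For the square root, write $8q^{n+1}+q^2-6q+1 = 8q^{n+1}\left(1+\varepsilon\right)$ with $\varepsilon = (q^2-6q+1)/(8q^{n+1})$. For $n\ge 1$ we have $\varepsilon = O(q^{1-n})$, so $\sqrt{1+\varepsilon} = 1+O(\varepsilon)$. This gives
\begin{align*}
\sqrt{8q^{n+1}+q^2-6q+1} = 2\sqrt{2}\, q^{\frac{n+1}{2}} + O(q^{\frac{3-n}{2}}).
\end{align*}
The error term here is dominated by $q^{\frac{n-1}{2}}$, since $\frac{3-n}{2}\le\frac{n-1}{2}$ for $n\ge 2$.

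Next, use $\frac{1}{2(q-1)} = \frac{1}{2q}\left(1+O(q^{-1})\right)$. Multiplying, the main term becomes $\sqrt{2}\,q^{\frac{n-1}{2}}\left(1+O(q^{-1})\right) = \sqrt{2}\,q^{\frac{n-1}{2}} + O(q^{\frac{n-3}{2}})$. The remaining contributions are $\frac{q-3}{2(q-1)} = O(1)$ and $O(q^{\frac{3-n}{2}})/q = O(q^{\frac{1-n}{2}})$. Both are $O(q^{\frac{n-3}{2}})$ as soon as $n\ge 3$.

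The only point needing care is the bookkeeping of the lower-order terms in the small cases. For $n=2$, a $(3,2)$-set must span $\PG(2,q)$, which is itself a plane, so property iii) of the definition is impossible; hence no such set exists and the statement is vacuous there. For $n\ge 3$ the expansion above goes through directly. There is no genuine obstacle; the argument is the same as the one deriving the previous corollary from Proposition~\ref{upper2}.
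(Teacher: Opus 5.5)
Your proposal is correct and follows the same route as the paper, which obtains this corollary directly from the quoted bound of Proposition~3.1 in the reference [P] on $4$-general sets, i.e.\ $(3,2)$-sets. Your asymptotic expansion of that bound, including the check that the lower-order terms are $O(q^{\frac{n-3}{2}})$ for $n \ge 3$, is accurate.
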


\section{Preliminary results}\label{pre}

Consider the symplectic polar space $\cW(7, q)$ or the orthogonal polar space $\cQ^+(7, q)$ of $\PG(7, q)$. A (partial) ovoid in $\cQ^+(7, q)$ (resp. $\cW(7,q)$) is a subset of (at most) $q^3+1$ (resp. $q^4+1$) pairwise non-perpendicular points. In \cite{K} Kantor constructed an ovoid of $\cQ^+(7, q)$ for $q$ even. In \cite{L1} Lunardon showed that the same set, independently from $q$, is the Grassmann embedding of a Desarguesian plane-spread of $\PG(5, q)$ (see also \cite{L}). In \cite{C} Cossidente proved that, if $q$ is odd, then it is a maximal partial ovoid of $\cW(7, q)$. We refer to this set as the {\em Desarguesian (partial) ovoid} of $\PG(7, q)$. Here we characterize the twisted cubics embedded in a Desarguesian (partial) ovoid and prove that if seven distinct points of a Desarguesian (partial) ovoid are contained in a four-dimensional projective subspace of $\PG(7, q)$, then four of them are contained in one of its twisted cubics. 

In $\F_{q^{3}}^{8}$ consider the $8$-dimensional $\F_q$-subspace $U_1$ given by the set of vectors
\begin{align*}
\left\{ \left(a, b^{q^2}, b^{q}, c, b, c^{q}, c^{q^2}, d \right) : a, d \in \F_{q}, b, c \in \F_{q^3} \right\}.
\end{align*}
Then $\PG(U_1)$ is a $7$-dimensional projective space over $\F_q$, since $\dim(U_1) = 8$. For $(a,b,c,d)\neq (0,0,0,0)$ denote by $P(a,b,c,d)$ the point of $\PG(U_1)$ defined by the vector $\left(a, b^{q^2}, b^{q}, c, b, c^{q}, c^{q^2}, d \right)$. Consider the following set consisting of $q^3+1$ points of $\PG(U_1)$ 
\begin{align*}
\cO_1 = \left\{ P(1,t,t^{q^2+q},t^{q^2+q+1}) = (1,t) \otimes (1,t^q) \otimes (1,t^{q^2}) \mid t \in \F(q^3) \right\} \cup \{P(0,0,0,1) = (0,1) \otimes (0,1) \otimes (0,1) \}.
\end{align*}
Thus points of $\cO_1$ are in bijection with the $q^3+1$ points of $\PG(1, q^3)$. Let $G$ be the subgroup of projectivities of $\PG(U_1)$ isomorphic to $\PGL(2, q^3)$ induced by the matrices
\begin{align*}
& \begin{pmatrix}
a & b \\
c & d 
\end{pmatrix}
\otimes 
\begin{pmatrix}
a^q & b^q \\
c^q & d^q 
\end{pmatrix}
\otimes
\begin{pmatrix}
a^{q^2} & b^{q^2} \\
c^{q^2} & d^{q^2} 
\end{pmatrix}, & a,b,c,d \in \F_{q^3}, ad- bc \ne 0.
\end{align*}
Then $G$ fixes $\cO_1$ and acts 3-transitively on its points. Since 
\begin{align*}
\begin{pmatrix}
a & b \\
c & d 
\end{pmatrix}^t 
\begin{pmatrix}
0 & 1 \\
-1 & 0 
\end{pmatrix}
\begin{pmatrix}
a & b \\
c & d 
\end{pmatrix} = 
(ad-bc) 
\begin{pmatrix}
0 & 1 \\
-1 & 0 
\end{pmatrix},
\end{align*}
the symplectic polar space $\cW(7, q)$ of $\PG(U_1)$ with alternating bilinear form given by
\begin{align}
\begin{pmatrix}
0 & 1 \\
-1 & 0 
\end{pmatrix}
\otimes 
\begin{pmatrix}
0 & 1 \\
-1 & 0 
\end{pmatrix}
\otimes 
\begin{pmatrix}
0 & 1 \\
-1 & 0 
\end{pmatrix} \label{bil}
\end{align}
is stabilized by $G$. If $q$ is even, $G$ also preserves a hyperbolic quadric $\cQ^+(7, q)$ of $\PG(U_1)$. In this case $\cO_1$ is an ovoid of $\cQ^+(7, q)$, whereas if $q$ is odd, $\cO_1$ is a maximal partial ovoid of $\cW(7, q)$ \cite{K, C}; see also \cite{L, FKLS}. By Lemma~\ref{lemma1}, no four points of $\cO_1$ are contained in a plane, i.e., $\cO_1$ is a $4$-general set of $\PG(U_1)$. Furthermore, it is complete \cite{P}. 

The group $\PGL(2, q^3)$ acts transitively on the $q^2(q^4+q^2+1)$ $q$-order sublines of $\PG(1, q^3)$ and the canonical $q$-order subline of $\PG(1,q^3)$ corresponds to 
\begin{align}
\left\{P(1,t,t^2,t^3) = \left(1,t,t,t^2,t,t^2,t^2,t^3\right) \mid t \in \F_q\right\} \cup \left\{P(0,0,0,1) = (0,0,0,0,0,0,1)\right\}, \label{can}
\end{align}
that forms a twisted cubic. Such a twisted cubic is the complete intersection of $\cO_1$ with the solid of $\PG(U_1)$ induced by
\begin{align*}
\left\{ \left(a, b, b, c, b, c, c, d \right) : a, b, c, d \in \F_{q} \right\}.
\end{align*}
It follows that $\cO_1$ contains a distinguished set of $q^2(q^4+q^2+1)$ twisted cubics. Next we see that these are all the twisted cubics contained in $\cO_1$. For more information on twisted cubics the reader is referred to \cite[Chapter 21]{H}.

\begin{lemma}\label{lemma1}
For $q \ge 4$, any five distinct points of $\cO_1$ that are contained in a solid $\Sigma$, lie in a twisted cubic of $\cO_1$. Such a twisted cubic is the complete intersection of $\cO_1$ with $\Sigma$ and corresponds to a $q$-order subline of $\PG(1, q^3)$. 
\end{lemma}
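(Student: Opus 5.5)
Since $G\cong\PGL(2,q^3)$ is $3$-transitive on $\cO_1$, I will first reduce to the case where three of the five points are $P_\infty=P(0,0,0,1)$, $P_0=P(1,0,0,0)$ and $P_1=P(1,1,1,1)$. The remaining two points are $P_s=P(1,s,s^{q^2+q},s^{q^2+q+1})$ and $P_u$ with $s,u\in\F_{q^3}\setminus\{0,1\}$, $s\neq u$. The goal is to show that $s,u\in\F_q$. Then all five parameters lie on the canonical subline $\F_q\cup\{\infty\}$, so the five points lie on the twisted cubic (\ref{can}). By transitivity of $\PGL(2,q^3)$ on $q$-order sublines, the general case follows.

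For the reduction I will use coordinates. Write a vector of $U_1$ as $(a,b^{q^2},b^q,c,b,c^q,c^{q^2},d)$ and record it by $(a,b,c,d)\in\F_q\times\F_{q^3}\times\F_{q^3}\times\F_q$, which is an $\F_q$-space of dimension $8$. Then
\begin{align*}
P_\infty=(0,0,0,1),\quad P_0=(1,0,0,0),\quad P_1=(1,1,1,1),\quad P_s=(1,s,s^{q^2+q},N(s)),
\end{align*}
with $N$ the norm. Modulo the span of $P_\infty,P_0$, the five points lie in a solid exactly when the images of $P_1,P_s,P_u$ in the $6$-dimensional $\F_q$-space of pairs $(b,c)\in\F_{q^3}^2$ span at most a $2$-dimensional subspace. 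Since $s\ne u$ and $s,u\ne 1$, no two of the three images are proportional, because their first coordinates $1,s,u$ are distinct modulo scalars in $\F_q$ only if $s\notin\F_q$. I will handle the scalar case separately. So the condition is $\lambda(1,1)+\mu(s,s^{q^2+q})+\nu(u,u^{q^2+q})=0$ with $\lambda,\mu,\nu\in\F_q$ not all zero.

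This gives two equations:
\begin{align*}
\lambda+\mu s+\nu u=0,\qquad \lambda+\mu s^{q^2+q}+\nu u^{q^2+q}=0.
\end{align*}
If $\mu=0$ or $\nu=0$, the first equation forces $u\in\F_q$ (or $s\in\F_q$). The second then gives $u^2=1$ from $u^{q^2+q}=u^2$, so $u=-1$, and I will chase this degenerate case separately. Otherwise $u=-(\lambda+\mu s)/\nu$ is an $\F_q$-affine function of $s$. Substituting into the second equation gives a polynomial relation in $s,s^q,s^{q^2}$. Writing $s^{q^2+q}=N(s)/s$, I will then apply the Frobenius to obtain the conjugate equations and eliminate. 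The expected conclusion is that $s$ satisfies a polynomial of degree at most $3$ over $\F_q$, and further that $s\in\F_q$. I need to rule out the case where $s$ generates $\F_{q^3}$.

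I expect this elimination to be the main obstacle. Parts of it look nontrivial: there is a genuine cubic-type relation, and for small $q$ exceptional solutions may exist, which is where $q\ge4$ enters. A cleaner route I would try first uses the cross-ratio. Apply a further element of $G$ fixing $\infty$ to normalize, and observe that the dimension condition should be equivalent to the cross-ratio of the four parameters $0,1,s,u$ (with $\infty$) lying in $\F_q$, possibly after squaring. That in turn means all five lie on a common $q$-subline.

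Finally, the count $|\cO_1\cap\Sigma|=q+1$ follows. $\Sigma$ is then the span of the cubic, and any sixth point of $\cO_1$ in $\Sigma$ would by the same argument lie in the subline determined by any three of the cubic's points.
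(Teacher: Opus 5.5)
\textbf{There is a genuine gap: the key step, proving $s\in\F_q$, is never carried out.} Your setup matches the paper's. You normalize three points to $P(0,0,0,1)$, $P(1,0,0,0)$, $P(1,1,1,1)$ via $G$, and reduce the solid condition to $\lambda+\mu s+\nu u=0$ and $\lambda+\mu s^{q^2+q}+\nu u^{q^2+q}=0$ with $(\lambda,\mu,\nu)\in\F_q^3$ nonzero. But the step that follows is the whole content of the lemma, and you leave it as an ``expected conclusion''.

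The target you name cannot work. Every element of $\F_{q^3}$ is a root of a nonzero polynomial of degree at most $3$ over $\F_q$. So such a relation can never exclude that $s$ generates $\F_{q^3}$, which is exactly the case you say you still need to rule out. The cross-ratio route only restates the goal: with $\infty,0,1$ fixed, the five parameters lie on a common $q$-subline exactly when $s,u\in\F_q$.

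The missing idea is to get degree $2$, and it needs no Frobenius elimination. Write $u=a+bs$ with $a=-\lambda/\nu$, $b=-\mu/\nu\in\F_q$. The second equation becomes $a(a-1)+ab(s^q+s^{q^2})+b(b-1)s^{q^2+q}=0$. Multiply by $s$ and use $s^{q^2+q+1}=N(s)\in\F_q$ and $s^{q+1}+s^{q^2+1}=\mathrm{Tr}(s)s-s^2$, where $\mathrm{Tr}(s)=s+s^q+s^{q^2}$. This gives
\[
-ab\,s^2+\bigl(a(a-1)+ab\,\mathrm{Tr}(s)\bigr)s+b(b-1)N(s)=0,
\]
a polynomial over $\F_q$ of degree at most $2$. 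You must also check it is nonzero:
\begin{itemize}
\item If $b=0$, then $u=a\in\F_q$ and $a(a-1)=0$ forces $u\in\{0,1\}$, which is impossible.
\item If $b=1$, then $a\ne0$ (else $u=s$), so $s^q+s^{q^2}=1-a\in\F_q$. Hence $s=\mathrm{Tr}(s)-(s^q+s^{q^2})\in\F_q$ directly.
\item Otherwise the constant term $b(b-1)N(s)$ is nonzero.
\end{itemize}
Hence $s\in\F_{q^2}\cap\F_{q^3}=\F_q$, and then $u=a+bs\in\F_q$. This is essentially the paper's argument.

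Two smaller points. In your degenerate case $\mu=0$, you have $u\in\F_q$, so subtracting the two equations gives $\nu(u^2-u)=0$, i.e.\ $u^2=u$, not $u^2=1$. So this case, and symmetrically $\nu=0$, is immediately contradictory and there is nothing to chase. Also, $q\ge4$ does not enter through exceptional small-$q$ solutions. The algebra works for every $q$ and yields distinct $s,u\in\F_q\setminus\{0,1\}$, which for $q\le3$ just means no five points of $\cO_1$ lie in a solid. The hypothesis only guarantees that the twisted cubic has at least five points. Your closing argument that $\cO_1\cap\Sigma$ is exactly the cubic is fine once the main step is in place.
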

\begin{proof}
We may assume without loss of generality that the five points are given by
\begin{align*}
& P(1,0,0,0), \; P(0,0,0,1), \; P(1,1,1,1), \; P\left(1,t_1,t_1^{q^2+q},t_1^{q^2+q+1}\right), \; P\left(1,t_2,t_2^{q^2+q},t_2^{q^2+q+1}\right),
\end{align*}
where $t_1, t_2 \in \F_{q^3} \setminus \{0, 1\}$, with $t_1 \ne t_2$. The fact that the five points are contained in a solid implies that there exist $\alpha, \beta, \gamma \in \F_q$, with $(\alpha, \beta, \gamma) \ne (0,0,0)$, such that 
\begin{align*}
& \alpha + \beta t_1 + \gamma t_2 = 0, \\
& \alpha + \beta t_1^{q^2+q} + \gamma t_2^{q^2+q} = 0.
\end{align*}
We want to show that $t_1, t_2 \in \F_q \setminus \{0, 1\}$. Assume by contradiction that $t_1 \notin \F_q$. Then $\gamma \ne 0$, otherwise either $(\alpha, \beta, \gamma) = (0,0,0)$ or $t_1 \in \F_q$. Hence 
\begin{align*}
& t_2 = - \frac{\alpha}{\gamma} - \frac{\beta}{\gamma} t_1, \\
& t_2^{q^2+q} = - \frac{\alpha}{\gamma} - \frac{\beta}{\gamma} t_1^{q^2+q}.
\end{align*}
It follows that
\begin{align}
\frac{\alpha}{\gamma} \left( \frac{\alpha}{\gamma} + 1 \right) + \frac{\beta}{\gamma} \left( \frac{\beta}{\gamma} + 1 \right) t_1^{q^2+q} + \frac{\alpha \beta}{\gamma} \left( t_1^{q^2} + t_1^{q} \right) = 0, \label{eq1}
\end{align}
and therefore
\begin{align*}
\frac{\alpha \beta}{\gamma} t_1^2 - \left( \frac{\alpha \beta}{\gamma} \left( t_1^{q^2} + t_1^{q} + t_1 \right) + \frac{\alpha}{\gamma} \left( \frac{\alpha}{\gamma}  + 1 \right) \right) t_1 - \frac{\beta}{\gamma} \left( \frac{\beta}{\gamma} + 1 \right) t_1^{q^2+q+1} = 0. 
\end{align*}
Set 
\begin{align*}
& A = \frac{\alpha \beta}{\gamma}, \;\; 
B = \frac{\alpha \beta}{\gamma} \left( t_1^{q^2} + t_1^{q} + t_1 \right) + \frac{\alpha}{\gamma} \left( \frac{\alpha}{\gamma}  + 1 \right), \;\;
C = \frac{\beta}{\gamma} \left( \frac{\beta}{\gamma} + 1 \right) t_1^{q^2+q+1}. & 
\end{align*}
Observe that $C \ne 0$. Indeed, if $\frac{\beta}{\gamma} = 0$, then by \eqref{eq1} either $\frac{\alpha}{\gamma} = 0$ or $\frac{\alpha}{\gamma} = -1$, that is $t_2 \in \{0, 1\}$, a contradiction. If $\frac{\beta}{\gamma} = -1$, then $\frac{\alpha}{\gamma} \ne 0$, otherwise $t_1 = t_2$, and hence \eqref{eq1} implies that $t_1^q + t_1^{q^2} \in \F_q$ and therefore $t_1 \in \F_q$, a contradiction. We infer that there exists a non-zero polynomial, namely $F(x) = A x^2 - B x - C$ with coefficients in $\F_q$ of degree at most two such that $F(t_1) = 0$. Hence $t_1 \in \F_{q^2} \cap \F_{q^3} = \F_q$, a contradiction.
\end{proof}

\begin{prop}\label{prop}
For $q \ge 4$, any twisted cubic contained in $\cO_1$ corresponds to a $q$-order subline of $\PG(1, q^3)$. 
\end{prop}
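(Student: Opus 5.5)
A twisted cubic $\cC$ of $\PG(U_1)$ has $q+1 \ge 5$ points and spans a solid $\Sigma$. So I will derive the statement directly from Lemma~\ref{lemma1}.

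Suppose $\cC \subseteq \cO_1$. Since $q \ge 4$, we can choose five distinct points of $\cC$. They are points of $\cO_1$ and lie in the solid $\Sigma = \langle \cC \rangle$. Lemma~\ref{lemma1} then says that $\Sigma \cap \cO_1$ is a twisted cubic $\cC'$ of $\cO_1$ corresponding to a $q$-order subline of $\PG(1,q^3)$.

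Next I compare $\cC$ with $\cC'$. On one hand $\cC \subseteq \Sigma \cap \cO_1 = \cC'$. On the other hand, both are twisted cubics, so both have exactly $q+1$ points. Hence $\cC = \cC'$.

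It follows that $\cC$ corresponds to a $q$-order subline, which is the claim. Consequently, the $q^2(q^4+q^2+1)$ distinguished twisted cubics are all the twisted cubics contained in $\cO_1$.

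The only points that need checking are standard facts. A twisted cubic spans a solid, since its points $(1,t,t^2,t^3)$ are in general position in its span. A twisted cubic has $q+1$ points. I expect no real obstacle, because the substantive work was already done in Lemma~\ref{lemma1}. The one subtlety is that the lemma requires five points, and the hypothesis $q \ge 4$ guarantees $q+1 \ge 5$.
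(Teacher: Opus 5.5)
Your proof is correct and follows the paper's own argument: take five points of the twisted cubic and apply Lemma~\ref{lemma1}. The only addition is a step the paper leaves implicit, namely that $\cC \subseteq \langle \cC \rangle \cap \cO_1$, and both sets have $q+1$ points, so $\cC$ equals the twisted cubic given by the lemma.
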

\begin{proof}
For any twisted cubic contained in $\cO_1$, take five of its points. By Lemma~\ref{lemma1}, such a twisted cubic corresponds to a $q$-order subline of $\PG(1, q^3)$.  
\end{proof}

\begin{lemma}\label{lem:seven-points}
If seven distinct points of $\cO_1$ are contained in a four-dimensional projective subspace of $\PG(U_1)$, then four of them are contained in a twisted cubic of $\cO_1$.
\end{lemma}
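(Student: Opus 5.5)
We reduce to an explicit computation using the $3$-transitivity of $G$ on $\cO_1$. Three of the seven points can be taken to be $P(1,0,0,0)$, $P(0,0,0,1)$ and $P(1,1,1,1)$. The remaining four are $P_i = P(1,t_i,t_i^{q^2+q},t_i^{q^2+q+1})$, $i=1,\dots,4$, with $t_i \in \F_{q^3}\setminus\{0,1\}$ pairwise distinct. Since $P(1,0,0,0)$ and $P(0,0,0,1)$ span the coordinates $a$ and $d$, the seven points lie in a four-dimensional subspace exactly when the remaining five vectors, restricted to the middle coordinates, span an $\F_q$-space of dimension at most $3$. These middle coordinates are determined by the pair $(b,c) \in \F_{q^3}^2$, a $6$-dimensional $\F_q$-space. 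So the condition is that the five vectors
\begin{align*}
w_0=(1,1),\quad w_i=(t_i,\,t_i^{q^2+q}),\ i=1,\dots,4,
\end{align*}
span at most a $3$-dimensional $\F_q$-subspace of $\F_{q^3}^2$. Equivalently, they satisfy at least two independent $\F_q$-linear relations.

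Fixing the frame this way, four of the seven points lie on a twisted cubic of $\cO_1$ exactly when the corresponding four parameters lie on a $q$-order subline (Proposition~\ref{prop} and the discussion before Lemma~\ref{lemma1}). A convenient sufficient condition is that some $t_i \in \F_q$: then $\infty,0,1,t_i$ lie on the canonical subline~\eqref{can}, which is a twisted cubic of $\cO_1$. More generally, the argument of Lemma~\ref{lemma1} shows the following: a single relation involving only $w_0,w_i,w_j$ with $\gamma\neq0$ forces $t_i\in\F_q$. In that case the five points $\infty,0,1,t_i,t_j$ lie in a solid, so four of them lie on a twisted cubic.

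The plan is therefore as follows.

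\emph{Step 1.} Take two independent relations $\sum_{k=0}^4 \lambda_k w_k=0$ and $\sum_k \mu_k w_k=0$ with coefficients in $\F_q$. Eliminate one $w_k$ by a suitable $\F_q$-combination to get a relation supported on at most four of the vectors.

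\emph{Step 2.} Case split on the support size. If some nontrivial relation has support of size at most $3$, then some subset of at most five of the seven points (including $\infty$ and $0$) lies in a solid, and Lemma~\ref{lemma1}, possibly after re-normalising via $G$ so that three chosen points are $\infty,0,1$, gives four points on a twisted cubic. If every relation has support exactly $4$, each relation involves at least three of $t_1,\dots,t_4$ together with possibly $w_0$. Using $G$ we re-choose which three of the seven points play the roles of $\infty,0,1$, to force a relation in which $w_0$ does not appear and only three $t$'s do. This gives $\beta_1 t_1+\beta_2 t_2+\beta_3 t_3=0$ and $\beta_1 t_1^{q^2+q}+\beta_2t_2^{q^2+q}+\beta_3t_3^{q^2+q}=0$.

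\emph{Step 3.} Handle the genuinely four-term relations by the same algebraic trick as in Lemma~\ref{lemma1}. Solve the first equation for one variable, substitute into the second, and use the identity $x^{q^2+q}=N(x)/x$ with $N(x)=x^{q^2+q+1}\in\F_q$. This produces a low-degree polynomial over $\F_q$ (or over $\F_q(t_j)$) vanishing at some $t_i$. We then conclude either $t_i\in\F_q$ or that $t_i$ lies in the subline through three others, namely that a cross-ratio lies in $\F_q$.

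The main obstacle is Step 3 in the case where no relation has small support. There a single relation ties together four independent parameters in $\F_{q^3}$, and the degree argument of Lemma~\ref{lemma1} no longer closes immediately, since the coefficients involve the other $t_j$ rather than lying in $\F_q$. To handle it, I would use both independent relations simultaneously and exploit the remaining freedom in $G$ (the stabiliser of three points still acts on the others). The aim is to arrange a relation with $w_0$ and only two $t$'s, reducing to Lemma~\ref{lemma1} verbatim. If that fails, the fallback is a direct resultant computation showing that the system forces a cross-ratio $(t_i,t_j;t_k,t_l)\in\F_q$ for some quadruple, which is precisely the condition that four points lie on a common $q$-order subline, hence on a twisted cubic of $\cO_1$.
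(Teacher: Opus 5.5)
There is a genuine gap, and it lies in the case that carries the content of the lemma.

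\paragraph{Why your reduction cannot reach the main case.} Your Steps 1--2 only draw conclusions from $\F_q$-linear relations of small support. A relation among $w_0,\dots,w_4$ with support of size at most $3$ lifts to a linear dependence among at most five of the seven points. Then five of them lie in a solid, and Lemma~\ref{lemma1} finishes. But the property ``some five of the seven points lie in a solid'' is invariant under $G$, and under any relabelling of which three points play $\infty,0,1$, because $G$ acts by projectivities.

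So consider the remaining case, where no five of the seven points lie in a solid, i.e.\ the seven points form an arc of the $\PG(4,q)$. Then in \emph{every} frame each nontrivial relation among the $w_k$ has support at least $4$. Hence no renormalisation can produce a relation $\beta_1w_1+\beta_2w_2+\beta_3w_3=0$, nor one involving $w_0$ and only two $t$'s. The ``re-choose the frame'' move of Step 2, and the aim stated in your last paragraph, are impossible exactly when they are needed. Indeed, the relation you write down at the end of Step 2 would already put five points in a solid, so Step 3 would be superfluous.

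In the arc case the required conclusion (some $t_i\in\F_q$, or a cross-ratio lying in $\F_q$) is a nonlinear condition. It is invisible to supports of $\F_q$-relations and has to be extracted by an explicit algebraic identity. Your ``fallback resultant computation'' names such an identity but does not provide it, so the main case is unproved.

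\paragraph{What the paper does.} The missing idea is to pass to $\F_{q^3}$. Embed the $\PG(4,q)$ in $\PG(4,q^3)$. Project the four points $P(1,t_i,t_i^{q^2+q},t_i^{q^2+q+1})$ from the plane $\langle P(1,0,0,0),P(0,0,0,1),P(1,1,1,1)\rangle$, subtracting $\F_{q^3}$-multiples of the frame vectors rather than $\F_q$-multiples.

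\begin{itemize}
\item The images $Q_i$ lie on a line. Hence the $4\times 5$ matrix with entries $Q_\alpha(t_i)$ has rank at most $2$, and all its $3\times 3$ minors vanish.
\item A specific combination of products of these minors factors as a nonzero multiple of $\bigl(\{t_1,t_2;t_3,t_4\}^q-\{t_1,t_2;t_3,t_4\}\bigr)\prod_{i=1}^4(t_i^q-t_i)$.
\item This gives the dichotomy directly: either some $t_i\in\F_q$, or the cross-ratio of $t_1,\dots,t_4$ lies in $\F_q$.
\end{itemize}

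The point is that the dimension condition becomes an $\F_{q^3}$-rank condition on polynomial expressions in the $t_i$. That form makes the nonlinear subline condition accessible, which $\F_q$-support bookkeeping cannot do.
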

\begin{proof}
We may assume without loss of generality that the seven points of $\cO_1$ are given
\begin{align*}
& P(1,0,0,0), \; P(0,0,0,1), \; P(1,1,1,1), \; P\left(1,t_1,t_1^{q^2+q},t_1^{q^2+q+1}\right), \\
& P\left(1,t_2,t_2^{q^2+q},t_2^{q^2+q+1}\right), \; P\left(1,t_3,t_3^{q^2+q},t_3^{q^2+q+1}\right), \; P\left(1,t_4,t_4^{q^2+q},t_4^{q^2+q+1}\right),
\end{align*}
where $t_1, t_2, t_3, t_4$ are four pairwise distinct elements in $\F_{q^3} \setminus \{0, 1\}$. Assume that these seven points are in a $\PG(4, q)$ of $\PG(U_1)$, then 
\begin{align*}
\pi = \langle P(1,0,0,0), P(0,0,0,1), P(1,1,1,1) \rangle
\end{align*} 
is a plane of $\PG(4, q)$ and $P\left(1,t_i,t_i^{q^2+q},t_i^{q^2+q+1}\right) \notin \pi$, $i = 1,2,3,4$, since $\cO_1$ is a $4$-general set. Embed $\PG(4, q)$ in $\PG(4, q^3)$ and let $\bar{\pi}$ be the plane of $\PG(4, q^3)$ such that $\pi = \bar{\pi} \cap \PG(4, q)$.Then, by projecting the four points $P\left(1,t_i,t_i^{q^2+q},t_i^{q^2+q+1}\right)$, $i = 1,2,3,4$, from $\bar{\pi}$, we obtain four points $Q_i$, $i = 1,2,3,4$, on a line of $\PG(4, q^3)$ disjoint from $\bar{\pi}$. More explicitly, we can consider $Q_i$ be given by 
\begin{align*}
Q_i & = P\left(1,t_i,t_i^{q^2+q},t_i^{q^2+q+1}\right) - \left(1- t_i^{q^2+1}\right) P(1,0,0,0) - \left(t_i^{q^2+q+1} - t_i^{q^2+1}\right) P(0,0,0,1)  - t^{q^2+1} P(1,1,1,1) \\
 & = \left( 0, t_i^{q^2} - t_i^{q^2+1}, t_i^q - t_i^{q^2+1}, t_i^{q^2+q} - t_i^{q^2+1}, t_i - t_i^{q^2+1}, 0, t_i^{q+1} - t_i^{q^2+1}, 0 \right).
\end{align*}
Therefore 
\begin{align*}
\rk
\begin{pmatrix}
Q(t_1)\\
Q(t_2)\\
Q(t_3)\\
Q(t_4)
\end{pmatrix}
\le 2. 
\end{align*}
Set
\begin{align*}
& Q_1(t)=t^{q^2}-t^{q^2+1}, \; Q_2(t)=t^q-t^{q^2+1}, \; Q_3(t)=t^{q^2+q}-t^{q^2+1}, \; Q_4(t)=t-t^{q^2+1},\; Q_5(t)=t^{q+1}-t^{q^2+1}, & \end{align*}
and
\begin{align*}
& M_{\alpha\beta\gamma}(t_i,t_j,t_k) = \det
\begin{pmatrix}
Q_\alpha(t_i)&Q_\beta(t_i)&Q_\gamma(t_i)\\
Q_\alpha(t_j)&Q_\beta(t_j)&Q_\gamma(t_j)\\
Q_\alpha(t_k)&Q_\beta(t_k)&Q_\gamma(t_k)
\end{pmatrix}. & 
\end{align*}
Then 
\begin{align}
0 = & \; (t_1-t_4)(t_2-t_3)M_{245}(t_1,t_2,t_3)M_{123}(t_1,t_2,t_4) - (t_1-t_3)(t_2-t_4)M_{245}(t_1,t_2,t_4)M_{123}(t_1,t_2,t_3) & \nonumber \\
& -(t_1-t_2)(t_3-t_4)M_{234}(t_1,t_3,t_4)M_{125}(t_2,t_3,t_4) & \nonumber \\
= & \; \left( (t_1-t_3)^q(t_2-t_4)^q(t_1-t_4)(t_2-t_3)-(t_1-t_3)(t_2-t_4)(t_1-t_4)^q(t_2-t_3)^q \right) \left( \prod_{i=1}^4(t_i^q-t_i) \right) & \nonumber \\
= & \; (t_1-t_4)^q(t_2-t_3)^q(t_1-t_4)(t_2-t_3) \left( \frac{(t_1-t_3)^q(t_2-t_4)^q}{(t_1-t_4)^q(t_2-t_3)^q} - \frac{(t_1-t_3)(t_2-t_4)}{(t_1-t_4)(t_2-t_3)} \right) \left( \prod_{i=1}^4(t_i^q-t_i) \right). & \label{eq3}
\end{align}
Recall that four distinct points $(1, u), (1, v), (1, w), (1, z) \in \PG(1,q^3)$ lie in a subline $\PG(1,q)$ if and only if their cross-ratio 
\begin{align*}
\{u, v; w, z\} =\frac{(u-w)(v-z)}{(u-z)(v-w)} 
\end{align*}
belongs to $\F_q$. Hence from \eqref{eq3}, either $t_i \in \F_q$, for some $i \in \{1,2,3,4\}$, and the four points 
\begin{align*}
& P(1,0,0,0), \; P(0,0,0,1), \; P(1,1,1,1), \; P\left(1,t_i,t_i^{q^2+q},t_i^{q^2+q+1}\right)
\end{align*}
belong to the twisted cubic \eqref{can}, or 
\begin{align*}
\{t_1, t_2; t_3, t_4\}^q = \{t_1, t_2; t_3, t_4\}
\end{align*}
and the four points 
\begin{align*}
& P\left(1,t_1,t_1^{q^2+q},t_1^{q^2+q+1}\right), \; P\left(1,t_2,t_2^{q^2+q},t_2^{q^2+q+1}\right), \; P\left(1,t_3,t_3^{q^2+q},t_3^{q^2+q+1}\right), \; P\left(1,t_4,t_4^{q^2+q},t_4^{q^2+q+1}\right),
\end{align*}
lie on a twisted cubic contained in $\cO_1$.
\end{proof}

\section{$5$-general sets and $(6, 4)$-sets in $\PG(6, q)$}\label{pg6}

Here we adopt the same notation used in Section~\ref{pre}. The group $G$ has four orbits on both points and hyperplanes of $\PG(U_1)$ and the number of points that a hyperplane of $\PG(U_1)$ has in common with $\cO_1$ takes one of the following values: 
\begin{align*}
& 1, \; q^2-q+1, \; q^2+1, \; q^2+q+1.
\end{align*}
In particular, if $\perp$ denotes the symplectic polarity of $\PG(U_1)$ given by \eqref{bil}, then $|P^\perp \cap \cO_1| = 1$ if and only if $P \in \cO_1$, see \cite{FKLS}. 

\begin{prop}\label{prop2}
For $q \ge 4$, if a hyperplane $H$ of $\PG(U_1)$ contains a twisted cubic of $\cO_1$, then $|H \cap \cO_1| \in \left\{q^2+1, q^2+q+1\right\}$.
\end{prop}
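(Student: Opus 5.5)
Since a hyperplane meeting $\cO_1$ in a twisted cubic contains at least $q+1 > 1$ points of $\cO_1$, the value $1$ is excluded at once. By the list of intersection sizes above, the only thing to rule out is $|H \cap \cO_1| = q^2-q+1$. I plan to show directly that no hyperplane $H$ with $|H\cap\cO_1| = q^2-q+1$ contains a twisted cubic of $\cO_1$.

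\emph{Reduction.} The group $G$ acts transitively on the $q$-order sublines of $\PG(1,q^3)$. By Proposition~\ref{prop}, the twisted cubics of $\cO_1$ are exactly the images of these sublines, so we may fix the twisted cubic to be the canonical one $\cC$ in \eqref{can}, spanning the solid $\Sigma$ induced by $\{(a,b,b,c,b,c,c,d)\}$. It then suffices to study the hyperplanes $H \supset \Sigma$, up to the action of the stabilizer $G_\cC \cong \PGL(2,q)$. Such a hyperplane is the kernel of a linear form on $U_1$ vanishing on $\Sigma$. In the coordinates $(a,b^{q^2},b^q,c,b,c^q,c^{q^2},d)$, such forms can be written as
\[
\mathrm{Tr}_{q^3/q}(\lambda b) + \mathrm{Tr}_{q^3/q}(\mu c), \qquad \lambda,\mu\in\F_{q^3}, \quad \mathrm{Tr}(\lambda)=\mathrm{Tr}(\mu)=0 .
\]
This gives a $4$-dimensional space of forms, as it should.

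\emph{Counting the intersection.} For $P(1,t,t^{q^2+q},t^{q^2+q+1})$, membership in $H$ becomes
\[
\mathrm{Tr}\bigl(\lambda t + \mu\, t^{q^2+q}\bigr) = 0 .
\]
Multiplying by $t^{q^2+q+1}$ when $t\neq 0$, this condition is $\mathrm{Tr}\bigl(\lambda t+\mu N(t)/t\bigr)=0$, where $N$ is the norm. The point $P(0,0,0,1)$ always lies in $H$. So $|H\cap\cO_1| = 1 + \#\{t\in\F_{q^3} : \mathrm{Tr}(\lambda t+\mu t^{q^2+q})=0\}$, with $\F_q\subseteq$ solution set, since $\cC\subset H$. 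I will count the solutions by the standard character-sum or linear-algebra route. One can write $t = x + y\theta$ with $x\in\F_q$ and $y$ running over a complement, or use the $\PGL(2,q)$-action to normalise $(\lambda,\mu)$ into a few orbit representatives. For each representative, the equation is a quadratic-type form over $\F_q$ in the $\F_q$-coordinates of $t$ modulo $\F_q$. Its number of zeros should come out as $q^2+q$ or $q^2$, giving $|H \cap \cO_1| \in \{q^2+q+1, q^2+1\}$.

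\emph{Alternative and main obstacle.} As a cross-check, or as an alternative if the normal-form computation becomes messy, I would use a double count over the $(q+1)(q^2+1)$ hyperplanes through $\Sigma$. Each point of $\cO_1\setminus\cC$ lies in exactly $(q+1)(q^2+1)$ hyperplanes through $\Sigma$ containing it, namely those through $\langle \Sigma, P\rangle$. Lemma~\ref{lemma1} controls which further points lie in a common $\PG(4,q)$ with $\Sigma$, and the count of pairs can be handled similarly. Comparing these first and second moments with the three a priori admissible sizes $q^2-q+1$, $q^2+1$, $q^2+q+1$ should force the number of hyperplanes of size $q^2-q+1$ through $\Sigma$ to be $0$. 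The main obstacle is the second-moment count, equivalently the explicit zero count in the previous step. This requires understanding how many points of $\cO_1\setminus\cC$ lie in a given $4$-space $\langle\Sigma,P\rangle$. I expect to settle it by the explicit trace-form computation above, using $q\ge4$ only through Lemma~\ref{lemma1} and Proposition~\ref{prop}.
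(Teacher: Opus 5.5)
Your setup matches the paper's. You move the cubic to the canonical one, note that the hyperplanes through $\Sigma$ are $\mathrm{Tr}(\lambda b)+\mathrm{Tr}(\mu c)=0$ with $\mathrm{Tr}(\lambda)=\mathrm{Tr}(\mu)=0$, and reduce to counting $t\in\F_{q^3}$ with $\mathrm{Tr}(\lambda t+\mu t^{q^2+q})=0$. However, the step that carries the whole proposition is only asserted (``should come out as''), namely that this count is $q^2$ or $q^2+q$; you carry out neither the character-sum route nor the orbit-representative route. This is a genuine gap, not a routine check. The left-hand side is a quadratic form plus a linear form in the three $\F_q$-coordinates of $t$, and such an equation can have exactly $q^2-q$ solutions. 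Indeed, the hyperplanes through $P(1,0,0,0)$ and $P(0,0,0,1)$ are exactly $\mathrm{Tr}(\lambda b)+\mathrm{Tr}(\mu c)=0$ with arbitrary $\lambda,\mu$. Since $G$ is transitive on pairs of points of $\cO_1$, some of these hyperplanes meet $\cO_1$ in $q^2-q+1$ points. So $q^2-q$ can only be excluded by using the trace-zero conditions in an essential way, and your sketch never says how. Knowing that $\F_q$ lies in the solution set is not by itself an argument.

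The missing idea is the paper's homogenization. On $\{(t,t^q,t^{q^2},z): t\in\F_{q^3},\ z\in\F_q\}\cong\F_q^4$, the form $\mathrm{Tr}(\mu t^{q^2+q})+z\,\mathrm{Tr}(\lambda t)$ defines a quadric $\cQ$ of $\PG(3,q)$. Your count equals $|\cQ\setminus\pi|$, where $\pi\colon z=0$.
If $\mu=0$, the count is $q^2$ directly. Otherwise $\cQ\cap\pi$ is a non-degenerate conic, so $\cQ$ is elliptic, a cone, or hyperbolic, giving $q^2-q$, $q^2$ or $q^2+q$ respectively. The conditions $\mathrm{Tr}(\lambda)=\mathrm{Tr}(\mu)=0$ say precisely that the line $\{(x,x,x,z):x,z\in\F_q\}$ lies on $\cQ$; your solutions $t\in\F_q$ are its affine points. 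Hence $\cQ$ is not elliptic, which is exactly what you need.

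Your alternative double count also needs repair. A $4$-space $\langle\Sigma,P\rangle$ lies in $q^2+q+1$ hyperplanes, not $(q+1)(q^2+1)$. The second moment also requires $\langle\Sigma,P\rangle\cap\cO_1=\cC\cup\{P\}$. Lemma~\ref{lemma1}, a statement about solids, does not give this, but it follows from the $\F_q$-independence of $1,s,s^{q^2+q}$ for $s\in\F_{q^3}\setminus\F_q$. With these corrections, you get three equations (total count, first moment, second moment) for the numbers of hyperplanes through $\Sigma$ of sizes $q^2-q+1$, $q^2+1$, $q^2+q+1$. They form an invertible Vandermonde-type system with solution $(0,(q+1)^2,q^3-q)$. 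So that route can be completed too, but it relies on the a priori list of intersection sizes, which the quadric argument does not need.
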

\begin{proof}
By Proposition~\ref{prop}, we may assume that the hyperplane $H$ contains 
\begin{align*}
\left\{P(1,t,t^2,t^3) = (1,t,t,t^2,t,t^2,t^2,t^3) \mid t \in \F_q\right\} \cup \left\{P(0,0,0,1) = (0,0,0,0,0,0,1)\right\}.
\end{align*}
Let $H$ be given by 
\begin{align*}
& \lambda a + \mu b + \mu^q b^q + \mu^{q^2} b^{q^2} + \nu c + \nu^q c^q + \nu^{q^2} c^{q^2} + \delta d = 0, & \mbox{ for some } \lambda, \delta \in \F_q, \mu, \nu \in \F_{q^3}. 
\end{align*}
Then $\lambda = \delta = \mu + \mu^q + \mu^{q^2} = \nu + \nu^q + \nu^{q^2} = 0$. We claim that the number of elements $t \in \F_{q^3}$ satisfying  
\begin{align}
& \mu t + \mu^q t^q + \mu^{q^2} t^{q^2} + \nu t^{q^2+q} + \nu^q t^{q^2+1} + \nu^{q^2} t^{q+1} = 0, \label{eq2}
\end{align}
equals $q^2$ or $q^2+q$. Let $W$ be $4$-dimensional $\F_q$-vector space given by 
\begin{align*}
\left\{(t,t^q,t^{q^2}, z) \mid z \in \F_q, t \in \F_{q^3}\right\}.
\end{align*}
Then $\PG(W) \simeq \PG(3, q)$. Let
\begin{align*}
\Phi(t, z) = \nu t^q \, t^{q^2} + \nu^q t \, t^{q^2} + \nu^{q^2} t \, t^q + z \left( \mu t + \mu^q t^q + \mu^{q^2} t^{q^2} \right).
\end{align*}
Since $\Phi(t, z)^q = \Phi(t, z)$, the points of $\PG(W)$ satisfying $\Phi(t, z) = 0$ form a quadric $\cQ$ of $\PG(W)$. Then the number of elements $t \in \F_{q^3}$ satisfying \eqref{eq2} equals 
\begin{align*}
|\cQ \setminus \pi |, 
\end{align*}
where $\pi$ is the plane $z = 0$. Since $\mu + \mu^q + \mu^{q^2} = \nu + \nu^q + \nu^{q^2} = 0$, the quadric $\cQ$ contains the line $\left\{\left(t,t^q,t^{q^2}, z\right) \mid z, t \in \F_q \right\}$ and by \cite[Theorem 7.16]{H1}, $\cQ \cap \pi$ is a non-degenerate conic. Hence $\cQ$ is either a quadratic cone or a hyperbolic quadric of $\PG(W)$. The claim follows.
\end{proof}

Let $H_1$ be a hyperplane of $\PG(U_1)$ such that $|H_1 \cap \cO_1| = q^2-q+1$. In what follows we show that $H_1 \cap \cO_1$ is a $5$-general set of $H_1 \simeq \PG(6, q)$. 

\begin{cor}\label{cor2}
For $q \ge 4$, a twisted cubic of $\cO_1$ meets $H_1$ in at most three points.
\end{cor}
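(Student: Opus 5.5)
We argue by contradiction, combining the structure of twisted cubics in $\cO_1$ with Proposition~\ref{prop2}. Let $\cC$ be a twisted cubic contained in $\cO_1$, and suppose $|\cC \cap H_1| \ge 4$.

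First we use that a twisted cubic spans a solid $\Sigma$ of $\PG(U_1)$ and that any four of its points are in general position. Hence four points of $\cC$ lying in $H_1$ span $\Sigma$. It follows that $\Sigma \subseteq H_1$, and therefore $\cC \subseteq \Sigma \subseteq H_1$.

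Next, since $H_1$ is a hyperplane containing a twisted cubic of $\cO_1$, Proposition~\ref{prop2} applies (here $q\ge 4$ is needed). It gives $|H_1 \cap \cO_1| \in \{q^2+1, q^2+q+1\}$.

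Finally, this contradicts $|H_1\cap\cO_1| = q^2-q+1$, because $q^2-q+1$ differs from both $q^2+1$ and $q^2+q+1$ for $q \ge 1$. Hence $|\cC \cap H_1| \le 3$.

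The only point requiring care is the first step: that any four points of a twisted cubic of $\PG(3,q)$ are not coplanar, i.e.\ a twisted cubic is an arc of its solid. This is standard (see \cite[Chapter 21]{H}). Alternatively, within the paper, it follows from the fact that $\cO_1$ is a $4$-general set: four points of $\cC \cap H_1$ span at least a solid. Since they lie in $\Sigma$, they span exactly $\Sigma$.
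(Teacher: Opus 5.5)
Your proof is correct and matches the paper's argument exactly: four points of the cubic lying in $H_1$ force the solid spanned by the cubic, and hence the whole cubic, into $H_1$, and then Proposition~\ref{prop2} contradicts $|H_1 \cap \cO_1| = q^2-q+1$. Your explicit check that four points of a twisted cubic span its solid is a step the paper leaves implicit.
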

\begin{proof}
Assume by contradiction that a twisted cubic $\cC$ of $\cO_1$ has at least four points in common with $H_1$, then the solid spanned by $\cC$ is contained in $H_1$ and $\cC \subset H_1 \cap \cO_1$, contradicting Proposition~\ref{prop2}.
\end{proof}

\begin{theorem}
For $q \ge 4$, a solid of $H_1$ has at most four points in common with $\cO_1$.
\end{theorem}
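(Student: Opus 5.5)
The argument is a short contradiction combining Lemma~\ref{lemma1} with Corollary~\ref{cor2}. Suppose a solid $\Sigma \subseteq H_1$ contains at least five points of $\cO_1$, and pick five of them. These five distinct points of $\cO_1$ lie in the solid $\Sigma$ of $\PG(U_1)$. Since $q \ge 4$, Lemma~\ref{lemma1} gives a twisted cubic $\cC$ of $\cO_1$ containing them. Moreover, $\cC$ equals the complete intersection $\Sigma \cap \cO_1$, and it corresponds to a $q$-order subline of $\PG(1,q^3)$.

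Next, $\cC = \Sigma \cap \cO_1$ and $\Sigma \subseteq H_1$, so every point of $\cC$ lies in $H_1$. Hence $\cC$ meets $H_1$ in $q+1 \ge 5$ points; even the five chosen points already suffice. This contradicts Corollary~\ref{cor2}, which states that a twisted cubic of $\cO_1$ meets $H_1$ in at most three points.

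Equivalently, one can bypass the corollary. The twisted cubic $\cC$ spans $\Sigma$, so $H_1 \supseteq \Sigma$ contains a twisted cubic of $\cO_1$. Proposition~\ref{prop2} then forces $|H_1 \cap \cO_1| \in \{q^2+1, q^2+q+1\}$, which contradicts $|H_1 \cap \cO_1| = q^2-q+1$.

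There is no real obstacle, since the substantive work is already done in Lemma~\ref{lemma1} and Proposition~\ref{prop2}. The only point needing care is that the five points of $\cO_1$ in $\Sigma$ are points of $\PG(U_1)$ lying in a solid of $\PG(U_1)$. This holds because $\Sigma$ is a solid of $H_1$ and therefore also a solid of the ambient space, so Lemma~\ref{lemma1} applies verbatim.
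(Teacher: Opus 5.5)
Your proposal is correct and matches the paper's proof: five points of $\cO_1$ in a solid of $H_1$ lie on a twisted cubic of $\cO_1$ by Lemma~\ref{lemma1}, and that cubic then meets $H_1$ in at least five points, contradicting Corollary~\ref{cor2}. Your alternative via Proposition~\ref{prop2} just inlines the proof of Corollary~\ref{cor2}, so it is the same argument.
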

\begin{proof}
Assume by contradiction that a solid of $H_1$ contains at least five distinct points of $\cO_1$. By Lemma~\ref{lemma1}, these five points are on a twisted cubic of $\cO_1$, which contradicts Corollary~\ref{cor2}.
\end{proof}

\begin{remark}\label{oss}
For $q \ge 4$, there exists a $4$-dimensional projective space of $H_1$ containing six points of $H_1 \cap \cO_1$, otherwise by projecting $H_1 \cap \cO_1$ we would obtain a $5$-general set of size $q^2-q$ in the quotient geometry, which is isomorphic to a $\PG(5, q)$. If $q > 5$, this contradicts Proposition~\ref{upper2}; if $q \in \{4, 5\}$, a direct check confirms the existence of a $4$-dimensional projective space of $H_1$ containing six points of $H_1 \cap \cO_1$. 
\end{remark}
Therefore the following holds.

\begin{theorem}\label{th1}
Let $H_1$ be a hyperplane of $\PG(U_1)$ such that $|H_1 \cap \cO_1| = q^2-q+1$, then $H_1 \cap \cO_1$ is a $5$-general set of $H_1 \simeq \PG(6, q)$, whenever $q \ge 4$. 
\end{theorem}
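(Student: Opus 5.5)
By definition, $H_1 \cap \cO_1$ is a $5$-general set of $H_1$ exactly when it is a $(4,3)$-set of $H_1 \simeq \PG(6,q)$. We therefore have to check conditions \emph{i)}--\emph{iii)} of the definition with $r = 4$, $s = 3$, $n = 6$, and I will verify them in that order.

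Condition \emph{i)}, that every solid of $H_1$ contains at most four points of $\cO_1$, is the theorem just proved. That result combines Lemma~\ref{lemma1} (five points of $\cO_1$ in a solid lie on a twisted cubic of $\cO_1$) with Corollary~\ref{cor2} (a twisted cubic of $\cO_1$ meets $H_1$ in at most three points). Since $H_1 \cap \cO_1 \subseteq \cO_1$, nothing more is needed here.

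Condition \emph{iii)} requires a $4$-dimensional subspace of $H_1$ containing six points of $H_1 \cap \cO_1$, and this is Remark~\ref{oss}. I would spell out its argument a little. Suppose no such $4$-space exists. Project $H_1 \cap \cO_1$ from one of its points $P$ onto a $\PG(5,q)$ of $H_1$ not through $P$. Condition \emph{i)} guarantees the projection is injective and gives $q^2-q$ points. A solid of the quotient is a $4$-space of $H_1$ through $P$, so containing at most five points of $H_1 \cap \cO_1$ there means at most four points of the image lie in any solid of the quotient. Hence the image would be a $5$-general set of $\PG(5,q)$ of size $q^2-q$. Plugging $n = 5$ into Proposition~\ref{upper2} gives a bound of order $\sqrt{2}\, q^{3/2}$, which is smaller than $q^2-q$ for $q > 5$, a contradiction. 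For $q \in \{4,5\}$ the bound is not strong enough, and the remark appeals to a direct computer check; I would accept that as stated.

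Condition \emph{ii)}, that $H_1 \cap \cO_1$ spans $H_1$, is the step that needs a genuine argument, though it is short. By \emph{iii)} there is a $4$-space $\Lambda \subset H_1$ containing six points of the set. Suppose the set were contained in a $5$-space $\Gamma \subset H_1$. Take a solid $\Sigma \subset \Gamma$ meeting $\Lambda$ in a plane. Counting all $4$-spaces and $5$-spaces is unnecessary; instead I would use the following cleaner route. A $5$-general set in $\PG(5,q)$ has at most about $\sqrt{2}\, q^{3/2}$ points by Proposition~\ref{upper2} with $n = 5$. Since \emph{i)} also holds inside $\Gamma$, the set $H_1 \cap \cO_1$ would be a $5$-general set of $\Gamma \simeq \PG(5,q)$, contradicting that bound for $q > 5$. (If the span were even smaller, apply the bound for the corresponding smaller dimension.) For $q \in \{4,5\}$ I would argue the same way using the explicit bound $\frac{\sqrt{8q^5+q^2-6q+1}+3q-5}{2(q-1)}$: numerically it is about $9.8 < 13$ for $q = 4$ and about $12.4 < 21$ for $q = 5$, so it rules out $q^2-q+1$ points directly.

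With \emph{i)}, \emph{ii)} and \emph{iii)} established, $H_1 \cap \cO_1$ is a $5$-general set of $H_1$.
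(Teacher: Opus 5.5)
Your handling of conditions \emph{i)} and \emph{iii)} is exactly the paper's proof. The theorem is obtained there by combining the preceding theorem (at most four points of $\cO_1$ in a solid of $H_1$, via Lemma~\ref{lemma1} and Corollary~\ref{cor2}) with Remark~\ref{oss}. The paper does not argue spanning separately.

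\textbf{The gap is in your extra argument for \emph{ii)} when $q\in\{4,5\}$: the numbers are wrong.} With $n=5$, Proposition~\ref{upper2} gives:
\begin{itemize}
\item[] $q=4$: $\frac{\sqrt{8185}+7}{6}\approx 16.2$;
\item[] $q=5$: $\frac{\sqrt{24996}+10}{8}\approx 21.01$.
\end{itemize}
So the bound excludes neither $13$ nor $21$ points. Your own treatment of \emph{iii)} already concedes that this bound cannot exclude a $5$-general set of size $q^2-q$ in $\PG(5,q)$ for these $q$. Since $q^2-q+1$ is only one larger, it cannot do better. Had your figures $9.8$ and $12.4$ been right, the direct check in Remark~\ref{oss} would be unnecessary. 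As written, \emph{ii)} is proved only for $q\ge 7$.

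\textbf{A uniform fix} uses the hyperplane intersection numbers recalled at the start of Section~\ref{pg6}.
\begin{itemize}
\item[] Suppose $\cX=H_1\cap\cO_1$ lies in a $5$-space $\Gamma\subset H_1$. Then $\Gamma\cap\cO_1=\cX$, and each of the $q+1$ hyperplanes of $\PG(U_1)$ through $\Gamma$ contains $\cX$.
\item[] Every hyperplane meets $\cO_1$ in at most $q^2+q+1$ points. So each of the $q$ hyperplanes through $\Gamma$ other than $H_1$ contains at most $2q$ points of $\cO_1\setminus\Gamma$, and $H_1$ contains none.
\item[] These hyperplanes partition $\PG(U_1)\setminus\Gamma$. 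Hence $q^3+1-(q^2-q+1)=q^3-q^2+q\le 2q^2$, i.e.\ $q^2-3q+1\le 0$, which is false for every $q\ge 3$.
\end{itemize}
This yields \emph{ii)} for all $q\ge 4$ with no numerics, and it also covers spans of smaller dimension. You should also delete the abandoned sentence about a solid $\Sigma\subset\Gamma$ meeting $\Lambda$ in a plane.
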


Next we show that a $4$-dimensional projective space of $H_1$ contains at most six points of $H_1 \cap \cO_1$, i.e., $H_1 \cap \cO_1$ is a $(6,4)$-set of $H_1 \simeq \PG(6, q)$. 

\begin{theorem}
For $q \ge 4$, a four-dimensional projective subspace of $H_1$ has at most six points in common with $\cO_1$.
\end{theorem}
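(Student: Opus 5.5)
Assume, for contradiction, that a four-dimensional projective subspace $\Pi$ of $H_1$ contains at least seven distinct points of $\cO_1$, and fix seven of them. By Lemma~\ref{lem:seven-points}, some four of these seven points lie on a twisted cubic $\cC$ of $\cO_1$. Then $\cC$ meets $H_1$ in at least four points, which contradicts Corollary~\ref{cor2}. So $\Pi$ contains at most six points of $\cO_1$.

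Here is why Corollary~\ref{cor2} applies in this form. Four points of a twisted cubic are in general position, so they span the solid $\langle \cC\rangle$. That solid lies inside $\Pi\subset H_1$, so $\cC\subset H_1\cap\cO_1$. Proposition~\ref{prop2} (via Corollary~\ref{cor2}) then says $|H_1\cap\cO_1|\in\{q^2+1,q^2+q+1\}$, which is incompatible with $|H_1\cap\cO_1|=q^2-q+1$.

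The one point to check is that Lemma~\ref{lem:seven-points} applies verbatim. It needs seven distinct points of $\cO_1$ in a four-dimensional subspace of $\PG(U_1)$, which is exactly our situation. The normalization in its proof (sending three of the points to $P(1,0,0,0)$, $P(0,0,0,1)$, $P(1,1,1,1)$) is legitimate because $G$ is $3$-transitive on $\cO_1$ and preserves the family of twisted cubics of $\cO_1$, namely the images of the canonical subline by Proposition~\ref{prop}. The twisted cubic it produces is one of those contained in $\cO_1$, which is what Corollary~\ref{cor2} requires. I do not expect a genuine obstacle: the hard algebra is already in Lemma~\ref{lem:seven-points} and Proposition~\ref{prop2}, and this statement is their combination.

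Together with Remark~\ref{oss}, which supplies a four-dimensional subspace meeting $H_1\cap\cO_1$ in exactly six points, this shows that $H_1\cap\cO_1$ is a $(6,4)$-set of $H_1\simeq \PG(6,q)$.
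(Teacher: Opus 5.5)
Your proof is correct and is exactly the paper's argument: seven points of $H_1\cap\cO_1$ in a four-dimensional subspace would, by Lemma~\ref{lem:seven-points}, force four of them onto a twisted cubic of $\cO_1$, contradicting Corollary~\ref{cor2}. Your extra paragraph on why Corollary~\ref{cor2} applies only unpacks that corollary's own proof: four points of the cubic span its solid, which then lies in $H_1$, and Proposition~\ref{prop2} gives the contradiction.
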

\begin{proof}
Suppose, by contradiction, that there are seven distinct points of $H_1 \cap \cO_1$ contained in a four-dimensional projective subspace of $H_1$. By Lemma~\ref{lem:seven-points}, four of them lie on a twisted cubic contained in $\cO_1$, contradicting Corollary~\ref{cor2}. 
\end{proof}

By Remark~\ref{oss}, it is easy to see that, if $q \ge 4$, there exists a $5$-dimensional projective subspace of $H_1$ containing eight points of $H_1 \cap \cO_1$. Hence the following holds.

\begin{theorem}\label{th2}
Let $H_1$ be a hyperplane of $\PG(U_1)$ such that $|H_1 \cap \cO_1| = q^2-q+1$, then $H_1 \cap \cO_1$ is a $(6, 4)$-set of $H_1 \simeq \PG(6, q)$, whenever $q \ge 4$. 
\end{theorem}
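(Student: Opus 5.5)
To prove Theorem~\ref{th2} we have to verify the three conditions in the definition of a $(6,4)$-set of $H_1 \simeq \PG(6,q)$: (i) every $4$-dimensional subspace of $H_1$ meets $H_1\cap\cO_1$ in at most six points; (ii) $H_1 \cap \cO_1$ spans $H_1$; (iii) some $5$-dimensional subspace of $H_1$ contains eight points of $H_1\cap\cO_1$.

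Condition (i) is exactly the preceding theorem. That theorem rests on Lemma~\ref{lem:seven-points} combined with Corollary~\ref{cor2}: seven points in a $\PG(4,q)$ would put four of them on a twisted cubic of $\cO_1$, and that cubic would then meet $H_1$ in at least four points, which is impossible. So nothing new is needed here.

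For (ii), suppose $H_1\cap\cO_1$ lay in a subspace of dimension at most $5$. Then that subspace would be contained in a $5$-space $\Gamma$ of $H_1$. I would cover $\Gamma$ by the $q+1$ four-dimensional subspaces of $H_1$ through a fixed $3$-space $\Sigma\subset\Gamma$ spanned by four points of the set. Here $\Sigma$ contains at most four points of the set, by the theorem on solids. Each of these $q+1$ four-spaces then contains at most $6-4=2$ further points, by (i). This gives
\[
|H_1\cap\cO_1|\le 4+2(q+1)<q^2-q+1
\]
for $q\ge4$, a contradiction. (Alternatively, the bound of Proposition~\ref{upper2} in the smaller space would also work.)

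For (iii), I would use Remark~\ref{oss}. It provides a $4$-space $\Lambda\subset H_1$ containing six points of $H_1\cap\cO_1$. Counting the $q+1$ five-spaces of $H_1$ through $\Lambda$, these partition the remaining $q^2-q-5$ points of the set. Hence some five-space through $\Lambda$ contains at least
\[
\left\lceil\frac{q^2-q-5}{q+1}\right\rceil\ge 2
\]
further points when $q\ge4$ (for $q=4$ the bound is $\lceil 7/5\rceil=2$). That five-space contains at least eight points of the set, which is condition (iii).

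The main obstacle would be (i), but it is already settled. The only delicate point left is keeping the counting in (ii) and (iii) valid down to $q=4$, and the arithmetic above confirms that it is.
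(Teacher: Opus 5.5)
Your steps (i) and (iii) match the paper's proof. The preceding theorem bounds $4$-spaces by six points, and Remark~\ref{oss} together with the count over the $q+1$ five-spaces through $\Lambda$ gives a $5$-space with eight points; this is what the paper means by ``it is easy to see''. The paper does not argue spanning separately: it is taken over from Theorem~\ref{th1}, which already calls $H_1\cap\cO_1$ a $(4,3)$-set of $H_1$.

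\textbf{The gap is in your argument for (ii), which fails exactly in the case $q=4$ that you say you checked.} The inequality $4+2(q+1)<q^2-q+1$ is equivalent to $q^2-3q-5>0$. This holds for $q\ge5$ but not for $q=4$, where $4+2(q+1)=14>13=q^2-q+1$, so your count gives no contradiction there. Your fallback fails too. Such a set of $q^2-q+1$ points in a $5$-space $\Gamma$, with at most four on a solid, is a $5$-general set of $\Gamma\simeq\PG(5,q)$. Proposition~\ref{upper2} with $n=5$ only gives $|\cX|\le 16$ for $q=4$ and $|\cX|\le 21$ for $q=5$, so it rules out nothing when $q\in\{4,5\}$. (A minor point: the spaces covering $\Gamma$ are the $q+1$ four-spaces of $\Gamma$ through $\Sigma$; in $H_1$ there are $q^2+q+1$ four-spaces through $\Sigma$.)

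To repair (ii), either cite Theorem~\ref{th1}, or argue with the hyperplane intersection numbers $1,q^2-q+1,q^2+1,q^2+q+1$ of $\cO_1$ rather than the subspace bounds inside $H_1$:
\begin{itemize}
\item Suppose $H_1\cap\cO_1\subseteq\Gamma$ with $\dim\Gamma=5$. The $q+1$ hyperplanes of $\PG(U_1)$ through $\Gamma$ partition the points outside $\Gamma$.
\item $H_1$ has no point of $\cO_1$ outside $\Gamma$.
\item Each of the other $q$ hyperplanes contains the $q^2-q+1$ points of $\Gamma\cap\cO_1$. Since the largest intersection number is $q^2+q+1$, each has at most $2q$ further points of $\cO_1$.
\item Hence $q^3+1\le (q^2-q+1)+2q^2$, i.e.\ $q^2-3q+1\le 0$, which is false for every $q\ge3$.
\end{itemize}
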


\section{$4$-general sets and $(5, 3)$-sets in $\PG(5, q)$}\label{pg5}

Let $\cX$ be an $(r, s)$-set in $\PG(n, q)$ and let $P$ be a point of $\cX$. By projecting $\cX \setminus \{P\}$ from $P$ an $(r-1, s-1)$-set of $\PG(n-1, q)$ of size $|\cX| - 1$ arises. Hence, if $\cX = H_1 \cap \cO_1$ and $P \in \cX$, by Theorem~\ref{th1} and Theorem~\ref{th2}, the set obtained by projecting $\cX \setminus \{P\}$ from $P$, say $\cY$, is a $4$-general set and a $(5, 3)$-set in $\PG(5, q)$ of size $q^2-q$. More precisely, since $P^\perp \cap \cX = \{P\}$, then $|P^\perp \cap \cY| = 0$, where $P^\perp \cap \PG(5, q) \simeq \PG(4, q)$. Therefore the following holds true.

\begin{theorem}\label{th3}
In $\AG(5, q)$, $q \ge 4$, there exists a set of size $q^2-q$ that is a $4$-general set and a $(5, 3)$-set.
\end{theorem}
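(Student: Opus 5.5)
The plan is to take $\cX = H_1 \cap \cO_1$, with $|\cX| = q^2-q+1$, and fix a point $P \in \cX$. I will then study the projection $\cY$ of $\cX \setminus \{P\}$ from $P$ onto a hyperplane $\Gamma \simeq \PG(5,q)$ of $H_1$ not through $P$. The first step is to check that $|\cY| = q^2-q$, i.e.\ that the projection is injective on $\cX\setminus\{P\}$. If two points $A,B \in \cX\setminus\{P\}$ had the same image, then $P,A,B$ would be collinear. This is impossible, since by Theorem~\ref{th1} the set $\cX$ is $5$-general, so in particular no three of its points are on a line.

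Next I verify the combinatorial properties by the standard lifting argument. An $s$-dimensional subspace $S$ of $\Gamma$ lifts to the $(s+1)$-dimensional subspace $\langle P, S\rangle$ of $H_1$, and its points of $\cY$ correspond bijectively to the points of $\cX\setminus\{P\}$ in $\langle P,S\rangle$. Consequently
\[
|S \cap \cY| = |\langle P,S\rangle \cap \cX| - 1 .
\]
Since $\cX$ is a $(4,3)$-set by Theorem~\ref{th1}, each plane of $\Gamma$ contains at most $3$ points of $\cY$. Since $\cX$ is a $(6,4)$-set by Theorem~\ref{th2}, each solid of $\Gamma$ contains at most $5$ points of $\cY$. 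The remaining axioms transfer in the same way. $\cY$ spans $\Gamma$ because $\cX$ spans $H_1$. The extremal subspaces also descend: Remark~\ref{oss} gives a $4$-space of $H_1$ with six points of $\cX$, and the remark after it gives a $5$-space with eight points. Choosing $P$ among those points, which is harmless if $G_{H_1}$ is transitive on $\cX$ and can otherwise be done directly, yields a solid of $\Gamma$ with five points of $\cY$ and a $4$-space with seven. Hence $\cY$ is a $4$-general set and a $(5,3)$-set.

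Finally, to get the affine statement, I choose $\Gamma$ so that the hyperplane at infinity is the image of $P^\perp \cap H_1$. Here $P^\perp$ is the symplectic polar hyperplane, and it contains $P$. I need $P^\perp \ne H_1$, which holds because $|P^\perp\cap\cO_1| = 1 \ne q^2-q+1$. Therefore $P^\perp \cap H_1$ is a $5$-space through $P$, and it projects onto a $4$-space $\Sigma_\infty$ of $\Gamma$. A point of $\cY$ lies in $\Sigma_\infty$ exactly when its preimage lies in $P^\perp$. Since $P^\perp \cap \cO_1 = \{P\}$, no point of $\cY$ is on $\Sigma_\infty$, so $\cY \subset \Gamma\setminus\Sigma_\infty \simeq \AG(5,q)$.

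The main obstacle is minor bookkeeping rather than any conceptual difficulty. It consists of the injectivity check and of making sure the extremal configurations (axiom iii) survive the projection for the chosen $P$. If transitivity of the stabilizer on $\cX$ is not available, I would instead pick $P$ inside the six-point $4$-space from Remark~\ref{oss}.
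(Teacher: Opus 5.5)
Your proposal is correct and follows the same route as the paper: project $(H_1\cap\cO_1)\setminus\{P\}$ from $P$, transfer the $(4,3)$- and $(6,4)$-properties via Theorems~\ref{th1} and~\ref{th2}, and use $P^\perp\cap\cO_1=\{P\}$ so that $P^\perp\cap H_1$ projects onto a hyperplane at infinity missing $\cY$. Your extra checks fill in details the paper leaves implicit. These are injectivity via the cap property, $P^\perp\ne H_1$, and choosing $P$ inside the six-point $4$-space so that axiom iii survives; for the latter, the eight-point $5$-space can be taken through that $4$-space by counting the $q+1$ five-spaces containing it.
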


We remark that, to the best of our knowledge, the only $4$-general set in $\PG(5, q)$ of order $q^2$ known in the literature has been described by Cooperstein in \cite[Theorem~7.7]{Co}. In this section we provide a more explicit description of $\cX$ and $\cY$. This will allow us to obtain a $4$-general set of size $q^2-q+2$ in $\PG(5, q)$, $q \ge 4$.

In $\F_{q^{3}}^{6}$ consider the $6$-dimensional $\F_q$-subspace $V_0$ given by the set of vectors
\begin{align*}
\left\{\left(a, a^q, a^{q^2}, b, b^{q}, b^{q^2} \right) : a, b \in \F_{q^3} \right\}.
\end{align*}
Consider the Grassmannian of the planes of $\PG(V_0) \simeq \PG(5, q)$ obtained by selecting the following as Grassmann coordinates of a plane of $\PG(V_0) \simeq \PG(5, q)$:
\begin{align}
\begin{split}
& (123),(124),(125),(126),(134),(315),(136),(145),(146),(156),\\
& (234),(235),(236),(245),(426),(256),(345),(346),(356),(456). \label{coord}
\end{split}
\end{align} 
For more results on Grassmannians the reader is referred to \cite[Chapter 3]{HT}. The image under the Grassmann embedding of each of the $q^3+1$ members of the Desarguesian plane spread of $\PG(V_0)$ having as director spaces the lines $\U_1 \U_4$, $\U_2 \U_5$, $\U_3 \U_6$ has precisely eight non-zero entries. It is easily seen that the set so obtained spans the $7$-dimensional projective space $\PG(U_1)$ and it coincides with $\cO_1$. Based on this, now we provide an equivalent description of $\cO_1$. Set 
\begin{align*}
& V_1 = \left\{\left(a, a^q, a^{q^2}, a^{q^3}, a^{q^4}, a^{q^5} \right) : a \in \F_{q^6} \right\}, \;\; V_2 = \left\{\left(b, c, d, b^{q^3}, c^{q^3}, d^{q^3} \right) : b, c, d \in \F_{q^6} \right\}. 
\end{align*}
Then $\PG(5, q) \simeq \PG(V_1) \subset \PG(V_2) \simeq \PG(5, q^3)$. Let $\cG_{2, 5}$ be the Grassmannian of the planes of $\PG(V_1)$ with Grassmann coordinates as in \eqref{coord}. Then $\cG_{2, 5}$ lies in a (non-canonical) $19$-dimensional projective space, which (with a slight abuse of notation) will be denoted by $\PG(19, q)$. The image of the $q^3+1$ planes of the Desarguesian spread of $\PG(V_1)$ having as director spaces the line ${\U}_1 \U_4 \cap \PG(V_2)$, $\U_2 \U_5 \cap \PG(V_2)$, $\U_3 \U_6 \cap \PG(V_2)$ gives 
\begin{align*}
\cO_2 = \left\{ \left(1, t^{q^2}, t^q, t^{q^2+q}, t, t^{q^2+1}, t^{q+1}, t^{q^2+q+1} \right) : t \in \F_{q^6}, t^{q^3+1} = 1 \right\}.
\end{align*}
Hence $\cO_1$ and $\cO_2$ are projectively equivalent. Consider the Singer cyclic group of $\PG(V_1)$ whose Singer cycle is induced by
\begin{align*}
& D = \diag\left(\w, \w^q, \w^{q^2}, \w^{q^3}, \w^{q^4}, \w^{q^5}\right), 
\end{align*}   
where $\w$ is a primitive element of $\F_{q^6}$. Denote by $S$ the cyclic group of projectivities of $\PG(19, q)$ whose generator $\phi$ is induced by $\bigwedge^3(D)$, the third exterior power of $D$. The group $S$ fixes $\cG_{2, 5}$. In particular, it fixes three $5$-dimensional projective spaces $\Pi$, $\Pi_1$, $\Pi_2$, which span a $\PG(17, q)$, and a line $\ell$ of $\PG(19, q)$, where $\ell \cap \langle \Pi, \Pi_1, \Pi_2 \rangle_q = \emptyset$. The projectivity $\phi$ acts on $\ell$, $\Pi$, $\Pi_1$ and $\Pi_2$ as the map induced by
\begin{align*}
& D^{q^4+q^2+1}, \;\; D^{q^2+q+1}, \;\; D^{q^3+q+1}, \;\; D^{q^4+q+1}, 
\end{align*}
respectively. Set $\langle \ell, \Pi \rangle_q = \PG(U_2) \simeq \PG(7, q)$ and $\langle \ell, \Pi_1, \Pi_2 \rangle_q = \PG(U_3) \simeq \PG(13, q)$ where
\begin{align*}
& U_2 = \left\{u(a,b) = \left(b, b^{q^5}, a, b^{q^4}, b^{q},  a^q, b^{q^2}, b^{q^3} \right) : a \in \F_{q^2}, b \in \F_{q^6} \right\}, \\
& U_3 = \left\{ v(a,b,c)=\left(a, a^q, b, b^q, b^{q^2}, b^{q^3}, b^{q^4}, b^{q^5}, c, c^{q}, c^{q^2}, c^{q^3}, c^{q^4}, c^{q^5} \right) : a \in \F_{q^2}, b, c \in \F_{q^6} \right\}.
\end{align*}
Here $\ell$ and $\Pi$ have underlying vector space given by 
\begin{align*}
& \left\{u(a,0) : a \in \F_{q^2} \right\}, \quad  U = \left\{u(0,b) : b \in \F_{q^6} \right\}, 
\end{align*}
respectively. Let $\phi_i = \phi|_{\PG(U_i)}$, $i = 2, 3$. Then $\phi_2$, $\phi_3$ are induced by
\begin{align}
& u(a, b) \in U_2 \mapsto u\left(\w^{q^4+q^2+1} a, \w^{q^2+q+1} b \right) \in U_2, \nonumber \\
& v(a, b, c) \in U_3 \mapsto v\left(\w^{q^4+q^2+1} a, \w^{q^3+q+1} b, \w^{q^4+q+1} c\right) \in U_3, \label{phi3}
\end{align}
respectively. In this setting $\cO_2$ is contained in $\PG(U_2)$. Indeed, for $t \in \F_{q^6}$, with $t^{q^3+1} = 1$, let $b \in \F_{q^6}$ such that $b^q = b t$, and $a = b t^q$. Then 
\begin{align*}
& a^q = t^{q^2+1} b, \;\; a^{q^2} = a, & \\
& b^{q^2} = t^{q+1} b, \;\; b^{q^3} = t^{q^2+q+1} b, \;\; b^{q^4} = t^{q^2+q} b, \;\; b^{q^5} = t^{q^2} b, & 
\end{align*}
as required. Moreover, $\cO_2$ is left invariant by $\phi_2$. Define $\sigma$ as $\phi_2^{(q+1)(q^2+q+1)}$, that is induced by 
\begin{align*}
& u(a, b) \in U_2 \mapsto u\left( a, \w^{(q+1)(q^2+q+1)(q-q^4)} b \right) \in U_2.
\end{align*}
Note that $\gcd\left(q^2-q+1, q\right) = \gcd\left(q^2-q+1, q^2+q+1\right) = 1$ and hence 
\begin{align*}
\gcd\left(q^6-1, (q+1)(q^2+q+1)(q-q^4)\right) = (q-1)(q+1)(q^2+q+1). 
\end{align*}
Therefore $\w^{(q+1)(q^2+q+1)(q-q^4)}$ has order $\frac{q^6-1}{\gcd\left(q^6-1, (q+1)(q^2+q+1)(q-q^4)\right)} = q^2-q+1$. It follows that $\sigma$ acts as 
\begin{align*}
&\left(b, b^{q^5}, a, b^{q^4}, b^{q},  a^q, b^{q^2}, b^{q^3} \right) \in U_2 \mapsto \left(\eta b, \eta^{-q^2} b^{q^5}, a, \eta^{-q} b^{q^4}, \eta^q b^{q}, a^q, \eta^{q^2} b^{q^2}, \eta^{-1} b^{q^3} \right) \in U_2,
\end{align*}
where $\eta = \w^{(q+1)(q^2+q+1)}$ and $T = \langle \sigma \rangle$ is a cyclic group of order $q^2-q+1$.
The group $T$ fixes each of the hyperplanes of $\PG(U_2)$ through $\Pi$. Let $H_2$ be the hyperplane of $\PG(U_2)$ whose underlying vector space is given by 
\begin{align*}
\left\{u(a,b) : a \in \F_q, b \in \F_{q^6}\right\}. 
\end{align*}
Then 
\begin{align*}
H_2 \cap \cO_2 & =  \left\{ \left(1, t^{q^2}, t^q, t^{q^2+q}, t, t^{q^2+1}, t^{q+1}, t^{q^2+q+1} \right) : t \in \F_{q^6}, t^{q^2-q+1} = 1 \right\} \\
& =  \left\{ u(1, t^{-q}) = \left(t^{-q}, t^{-1}, 1, t^{q^2}, t^{-q^2}, 1, t, t^q \right) : t \in \F_{q^6}, t^{q^2-q+1} = 1 \right\} \\
& = \left\{ u(1, t) : t \in \F_{q^6}, t^{q^2-q+1} = 1\right\}, 
\end{align*}
and $T$ acts transitively on $H_2 \cap \cO_2$, where $|H_2 \cap \cO_2| = q^2-q+1$. Note that $\Pi \simeq \PG(5, q)$ is a hyperplane of $H_2 \simeq \PG(6, q)$ that is disjoint from $\cO_2$. Hence, by Theorem~\ref{th1} and Theorem~\ref{th2}, we have the following.
\begin{theorem}\label{th4}
In $\AG(6, q)$, $q \ge 4$, there exists a transitive set of size $q^2-q+1$ that is a $5$-general set and a $(6, 4)$-set.
\end{theorem}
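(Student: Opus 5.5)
The plan is to assemble Theorem~\ref{th4} from the explicit model just built together with Theorems~\ref{th1} and~\ref{th2}.

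\emph{Step 1: reduce to the earlier theorems.} Transport those theorems from $\PG(U_1)$ to $\PG(U_2)$. The set $\cO_2$ is the image of $\cO_1$ under the projectivity identifying the two Grassmann descriptions of the Desarguesian plane spread. Hence $H_2$ corresponds to a hyperplane $H_1$ of $\PG(U_1)$ with $|H_1\cap\cO_1| = |H_2\cap\cO_2| = q^2-q+1$. Theorems~\ref{th1} and~\ref{th2} then give that $\cX = H_2\cap\cO_2$ is a $5$-general set and a $(6,4)$-set of $H_2\simeq\PG(6,q)$ for $q\ge4$. Conditions ii) and iii) in the definition of an $(r,s)$-set are part of those theorems: spanning is covered by Remark~\ref{oss} and the discussion before Theorem~\ref{th2}. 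One small point must be checked, namely that $\cX$ spans $H_2$. This follows because, by Remark~\ref{oss}, it contains six points in a $4$-space and eight points in a $5$-space, which forces the span to be the whole of $H_2$.

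\emph{Step 2: place $\cX$ in an affine space.} The solid-level subspace $\Pi$, with underlying vector space $U = \{u(0,b)\}$, has projective dimension $5$ and lies in $H_2$, since its points have $a = 0\in\F_q$; so it is a hyperplane of $H_2$. Points of $\cX$ are $u(1,t)$ with first coordinate block $a = 1 \neq 0$, so $\cX\cap\Pi=\emptyset$. Thus $\cX\subset H_2\setminus\Pi\simeq\AG(6,q)$. Being an $(r,s)$-set is inherited by this affine part: every affine $s$-subspace extends to a projective one, and spanning and the existence of the witnessing subspace are unaffected.

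\emph{Step 3: transitivity.} Use $T=\langle\sigma\rangle$. The map $\sigma$ fixes the $a$-coordinate, so it stabilizes $H_2$ and $\Pi$ and therefore acts on $\AG(6,q)$ as an affine collineation. Since $\phi_2$ leaves $\cO_2$ invariant, $\sigma$ does as well. On $\cX$ it acts as $u(1,t)\mapsto u(1,\eta t)$, where $\eta$ has order $q^2-q+1$ and the $t$'s range over the group of $(q^2-q+1)$-th roots of unity. It is worth double-checking this formula against the explicit coordinate action given above, since the coordinates of $u(1,t)$ transform consistently under $b\mapsto \eta b$. Hence $T$ is regular on $\cX$, and $\cX$ is transitive.

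\emph{Main obstacle.} The main difficulty is only bookkeeping: confirming that the identification of $\cO_1$ with $\cO_2$ carries the hyperplane $H_2$ to a hyperplane meeting $\cO_1$ in exactly $q^2-q+1$ points. This is immediate from the count $|H_2\cap\cO_2|=q^2-q+1$ computed above, since projectivities preserve intersection sizes.
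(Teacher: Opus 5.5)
Your proof follows the paper's route. You transport Theorems~\ref{th1} and~\ref{th2} to $\cX=H_2\cap\cO_2$ through the projective equivalence of $\cO_1$ and $\cO_2$. You take the $5$-space $\Pi$, which is disjoint from $\cO_2$, as the hyperplane at infinity of $H_2$. Transitivity comes from $T=\langle\sigma\rangle$ acting as $u(1,t)\mapsto u(1,\eta t)$. Your requirement that $\eta$ have order $q^2-q+1$ is correct. The multiplier is $\w^{(q+1)(q^2+q+1)(q-q^4)}$, which satisfies $\eta^{q^3+1}=1$ as the displayed coordinate action requires. The paper's $\w^{(q+1)(q^2+q+1)}$ has order $(q-1)(q^2-q+1)$ and is a slip.

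The one step that fails as written is your check that $\cX$ spans $H_2$. Remark~\ref{oss} and the observation before Theorem~\ref{th2} give only condition iii): six points in a $4$-space and eight points in a $5$-space. They say nothing about condition ii). Eight points of $\cX$ in a $5$-space is fully compatible with all of $\cX$ lying in that $5$-space, so your inference is a non sequitur. You were right to flag spanning, since the paper leaves it implicit.

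The gap is easy to repair in your explicit model:
\begin{itemize}
\item[(a)] The $\F_q$-span of the group $\{t : t^{q^2-q+1}=1\}$ is closed under multiplication, so it is a subfield of $\F_{q^6}$.
\item[(b)] This subfield contains an element of order $q^2-q+1$. For $q\ge 3$ such an element lies neither in $\F_{q^2}$ nor in $\F_{q^3}$, so the span is all of $\F_{q^6}$.
\item[(c)] We have $\sum_t u(1,t)=u(q^2-q+1,0)=u(1,0)$, because these roots of unity sum to $0$ and $q^2-q+1\equiv 1$ in $\F_q$.
\end{itemize}
Hence the vectors $u(1,t)$ span $\{u(a,b): a\in\F_q,\ b\in\F_{q^6}\}$, i.e.\ $\cX$ spans $H_2$.

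Alternatively, suppose $\cX$ lay in a $5$-space $\Sigma$. Each of the $q+1$ hyperplanes of $\PG(U_2)$ through $\Sigma$ contains $\cX$, so it meets $\cO_2$ in $q^2-q+1$, $q^2+1$ or $q^2+q+1$ points. Together they would then cover at most $2q(q+1)$ of the $q^3-q^2+q$ points of $\cO_2\setminus\Sigma$. This is impossible for $q\ge 4$.
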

By projecting from $u(1,1) \in H_2 \cap \cO_2$ the $q^2-q$ points of $(H_2 \cap \cO_2) \setminus \{u(1,1)\}$ to the hyperplane $\Pi \simeq \PG(5, q)$ we obtain the pointset $\cY$ of size $q^2-q$ given by
\begin{align*}
\left\{u(0, b-1) : b \in \F_{q^6}, b^{q^2-q+1} = 1, b \ne 1\right\}.
\end{align*}
Next we show that $\cY$ lies in the cone having as vertex a line and base the $q^2-q$ points of $\cQ^-(3, q) \setminus \cC$, where $\cQ^-(3, q)$ is an elliptic quadric and $\cC$ one of its conic sections.

Define 
\begin{align*}
& V = \left\{u(0, b) : b \in \F_{q^6} \setminus \{0\}, F(b) = 0\right\}, & \mbox{ where } F(X) = X^{q^2}-X^q+X.
\end{align*}
Since 
\begin{align*}
\begin{pmatrix}
0 & -1 \\
1 & 1
\end{pmatrix}
\begin{pmatrix}
0 & -1 \\
1 & 1
\end{pmatrix}^q
\ldots 
\begin{pmatrix}
0 & -1 \\
1 & 1
\end{pmatrix}^{q^5}
= \begin{pmatrix}
0 & -1 \\
1 & 1
\end{pmatrix}^6
= \begin{pmatrix}
1 & 0 \\
0 & 1
\end{pmatrix}, 
\end{align*}
by \cite[Theorem 7]{MS}, it follows that $\PG(V) \simeq \PG(1, q)$ is a line of $\Pi$. Projecting $u(0, x) \in \Pi \setminus \PG(V)$ from $\PG(V)$ we recover $u(0, x + V)$. By means of the isomorphism 
\begin{align*}
& u(0, x + V) \in U/V \mapsto u(0, F(x)) \in W, & \mbox{ where } W = \left\{u(0, F(b)) : b \in \F_{q^6} \right\},
\end{align*}
we have that $\PG(W) \simeq \PG(3, q)$ is the quotient geometry of $\Pi$ from $\PG(V)$.

\begin{lemma}\label{lemma2}
The following hold.
\begin{itemize}
\item[i)] $|\PG(V) \cap \cY| = 0$,
\item[ii)] $W = \left\{ u(0, z) : z \in \F_{q^6}, z+z^q = z^{q^3} + z^{q^4} \right\}$.
\end{itemize}
\end{lemma}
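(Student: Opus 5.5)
The plan is to reduce both items to computations with the $\F_q$-linear map $F(X)=X^{q^2}-X^q+X$ on $\F_{q^6}$. The input from the text above is that $\ker F$ (inside $\F_{q^6}$) is a $2$-dimensional $\F_q$-subspace, since $\PG(V)$ is a line by \cite[Theorem 7]{MS}. I will also use repeatedly that $b^{q^6}=b$ for all $b\in\F_{q^6}$.

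For i), a point $u(0,b-1)$ of $\cY$ lies on $\PG(V)$ exactly when $F(b-1)=0$. Here $b^{q^2-q+1}=1$ and $b\neq 1$. Because $F$ is $\F_q$-linear and $F(1)=1$, we have $F(b-1)=F(b)-1$. The relation $b^{q^2-q+1}=1$ gives $b^{q^2}=b^{q-1}$. Substituting,
\begin{align*}
F(b)-1 = b^{q-1}-b^q+b-1 = (b^{q-1}-1)(1-b).
\end{align*}
Since $b\ne 1$, this vanishes only if $b^{q-1}=1$, that is, only if $b\in\F_q^*$. For such $b$ we get $b^{q^2-q+1}=b$, so the defining condition forces $b=1$, which is a contradiction. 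Hence $|\PG(V)\cap\cY|=0$.

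For ii), I will show that the image $F(\F_{q^6})$ is contained in the set on the right-hand side, and then compare dimensions. Put $z=F(b)$. Then $z^q=b^{q^3}-b^{q^2}+b^q$, so $z+z^q=b+b^{q^3}$. Raising this to the $q^3$ and using $b^{q^6}=b$ gives
\begin{align*}
z^{q^3}+z^{q^4}=(z+z^q)^{q^3}=b^{q^3}+b,
\end{align*}
so $z+z^q=z^{q^3}+z^{q^4}$ holds on the image.

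For the dimension comparison, the image of $F$ has $\F_q$-dimension $6-\dim\ker F=4$. The right-hand side is the kernel of the $q$-polynomial $G(Z)=Z+Z^q-Z^{q^3}-Z^{q^4}$. This polynomial is nonzero of degree $q^4$, so it has at most $q^4$ roots, and its kernel has dimension at most $4$. Since the image of $F$ is a $4$-dimensional subspace of this kernel, the two coincide, which gives $W=\{u(0,z) : z\in\F_{q^6},\, z+z^q=z^{q^3}+z^{q^4}\}$.

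The only genuinely external input is the fact that $\dim\ker F=2$, which the paper already takes from \cite{MS}. Everything else is the two short computations above. I expect the one step needing care to be the factorisation in i), which depends on choosing the substitution $b^{q^2}=b^{q-1}$ correctly.
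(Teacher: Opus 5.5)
Your proof is correct and follows the paper's route. In i), your factorisation $F(b-1)=(b^{q-1}-1)(1-b)$ is the paper's identity $F(x)=-x^{q^2+1}$ with $x=b-1$; since $b^{q-1}-1=b^{q^2}-1=(b-1)^{q^2}$, nonvanishing follows directly from $b\neq 1$, so your detour through $b\in\F_q^*$ is not needed. In ii), you verify the same inclusion $F(\F_{q^6})\subseteq\{z : z+z^q=z^{q^3}+z^{q^4}\}$, and your explicit dimension comparison ($\dim F(\F_{q^6})=4$ against at most $q^4$ roots of $Z+Z^q-Z^{q^3}-Z^{q^4}$) supplies the reverse inclusion that the paper leaves implicit in ``the result follows''.
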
 
\begin{proof}
Let $u(0, x) \in \cY$, then there exists $b \in \F_{q^6}$, with $b^{q^2-q+1} = 1$, $b \ne 1$, such that $x = b-1 \ne 0$. Hence $(x+1)^{q^2+1} = (x+1)^q$, that is 
\begin{align*}
& x^{q^2+1} + x^{q^2} - x^q + x = 0.
\end{align*}
Therefore 
\begin{align*}
& F(x) = - x^{q^2+1} \ne 0,
\end{align*}
and $|\PG(V) \cap \cY| = 0$.

Let $u(0, z) \in W$, then there exists $x \in \F_{q^6}$ such that $z = F(x) = x^{q^2}-x^q+x$. Hence 
\begin{align*}
& z^q = x^{q^3}-x^{q^2}+x^q, \quad z^{q^3} = x^{q^5}-x^{q^4}+x^{q^3}, \quad z^{q^4} = x-x^{q^5}+x^{q^4}.
\end{align*}
Therefore 
\begin{align*}
& z+z^q = z^{q^3} + z^{q^4}.
\end{align*}
and the result follows.
\end{proof}

Let
\begin{align*}
\Phi(z) = z^{q^2} \, z + z^{q^2} \, z^{q} + z^{q} \, z^{q^3}. 
\end{align*}
Assume that $z+z^q = z^{q^3} + z^{q^4}$. Then 
\begin{align*}
\Phi(z)^q & = z^{q^3} \, z^{q} + z^{q^3} \, z^{q^2} + z^{q^2} \, z^{q^4} & \\
& = z^{q^2} \, z + z^{q^2} \, z^{q} + z^{q} \, z^{q^3} & \\
& = \Phi(z). & 
\end{align*}
Hence the points of $\PG(W)$ satisfying $\Phi(z) = 0$ form a quadric $\cE$ of $\PG(W)$. Let $\pi$ be the plane of $\PG(W)$ whose underlying vector space is 
\begin{align*}
\left\{ u(0, z) : z \in \F_{q^3} \right\} \subset W.
\end{align*}
Then by \cite[Theorem 7.16]{H1}, $\cE \cap \pi$ is a non-degenerate conic, say $\cC$. 

\begin{theorem}\label{th5}
The following hold.
\begin{itemize}
\item[i)] The projection of a point of $\cY$ from $\PG(V)$ yields a point of $\cE \setminus \cC$.  
\item[ii)] The projection of $\cY$ from $\PG(V)$ to $\cE \setminus \cC$ is bijective.
\end{itemize}
\end{theorem}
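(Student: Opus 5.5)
The plan is to work entirely with the explicit coordinates set up above. A point of $\cY$ is $u(0,x)$ with $x=b-1$, where $b^{q^2-q+1}=1$ and $b\ne 1$. By Lemma~\ref{lemma2} and the isomorphism $U/V\to W$, its projection from $\PG(V)$ is the point $u(0,z)$ of $\PG(W)$ with $z=F(x)$. In the proof of Lemma~\ref{lemma2} we already found $z=F(x)=-x^{q^2+1}$, and by Lemma~\ref{lemma2}~i) this is nonzero. So the image of $\cY$ is the set of points $u(0,-x^{q^2+1})$, and both parts of the statement become questions about $z=x^{q^2+1}$ up to $\F_q$-scalars.

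\textbf{Part i).} I would substitute $z=x^{q^2+1}$ into $\Phi$ and show $\Phi(z)=0$. The three terms of $\Phi(z)$ are $x^{q^4+q^2+q^2+1}$, $x^{q^4+q^2+q^3+q}$ and $x^{q^3+q+q^5+q^3}$. The way to verify the sum vanishes is to use $b=x+1$ and $b^{q^2+1}=b^q$. A convenient route is to express everything through $b$: with $b^{q^3}=b^{-1}$ (which follows from $b^{q^3+1}=1$, since $q^2-q+1$ divides $q^3+1$), we get
\[
x^{q^i}=b^{q^i}-1,\qquad b^{q^2}=b^{q-1},\qquad b^{q^4}=b^{-q},\qquad b^{q^5}=b^{1-q}.
\]
Then each $x^{q^i}$ is a rational function of $b$ and $b^q$, and $\Phi(z)$ becomes a rational expression in two quantities $b,b^q$ that should simplify to $0$. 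I expect this bookkeeping to be the main computational step; I would organize it by writing $\beta=b^q$ and checking the identity symbolically.

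To finish i), I still need $u(0,z)\notin\pi$, that is, $z\notin\F_{q^3}$. If $z=-x^{q^2+1}\in\F_{q^3}$, then $z^{q^3}=z$. Rewriting this in $b$ should force $b\in\F_{q^3}$. Since $\gcd(q^2-q+1,q^3-1)=1$ and $b^{q^2-q+1}=1$, that gives $b=1$, which is excluded.

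\textbf{Part ii).} Here I need two facts: the projection restricted to $\cY$ is injective, and $|\cE\setminus\cC|=q^2-q=|\cY|$. For the count, $\cE$ contains the conic $\cC=\cE\cap\pi$. I would show $\cE$ is an elliptic quadric, so that $|\cE\setminus\cC|=q^2+1-(q+1)=q^2-q$. One way is to exhibit its $q^2-q$ points off $\pi$ via injectivity and bound it above. Better, I would argue directly: $\cE$ is non-degenerate, it contains a conic, and it cannot be a cone or hyperbolic because its points outside $\pi$ number at most $q^2-q$. I would try to obtain that last bound from the norm-like shape of $\Phi$.

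For injectivity, suppose $x_1^{q^2+1}=\lambda x_2^{q^2+1}$ with $\lambda\in\F_q^*$. I would convert this into an equation in $b_1,b_2$ of order dividing $q^2-q+1$ and deduce $b_1=b_2$. Then $|\text{image}|=q^2-q$, which together with $|\cE\setminus\cC|=q^2-q$ gives the bijection. I expect the main obstacle to be this injectivity step, together with identifying $\cE$ as elliptic. I would attempt both by raising the relation to suitable Frobenius powers and using that $x\mapsto x^{q^2+1}$ behaves well on the coset structure of the order-$(q^2-q+1)$ subgroup.
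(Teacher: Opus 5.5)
Your part i) is correct and is the paper's argument in slightly different form. Because $b^{q^2+1}=b^q$, your $z=-x^{q^2+1}$ coincides with the paper's $F(b)-1$. In terms of $b$ and $\beta=b^q$ one gets $z=(b-\beta)(b-1)/b$ and $z^{q^3}=(b-\beta)(b-1)/(b\beta)$. From these, $\Phi(z)=0$ follows, and $z^{q^3}-z=(b-\beta)(b-1)(1-\beta)/(b\beta)$ vanishes only if $b\in\F_q$, hence only if $b=1$. The same formulas settle injectivity at once, so it is not the obstacle you expect: $z/z^{q^3}=b^q$ is unchanged under $z\mapsto\lambda z$ with $\lambda\in\F_q^*$, so the image point determines $b$.

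The genuine gap is the count $|\cE\setminus\cC|=q^2-q$, i.e.\ surjectivity.
\begin{itemize}
\item Since $\cE\cap\pi=\cC$ is a non-degenerate conic, $\cE$ is a priori an elliptic quadric, a cone with vertex off $\pi$, or a hyperbolic quadric. These have $q^2-q$, $q^2$ and $q^2+q$ points off $\pi$ respectively. Injectivity gives only the lower bound $q^2-q$, which is compatible with all three.
\item Your reason for excluding the cone and the hyperbolic case (``its points outside $\pi$ number at most $q^2-q$'') is exactly the statement to be proved. ``The norm-like shape of $\Phi$'' does not supply it.
\item Non-degeneracy of $\cE$ is neither justified nor sufficient, since it does not separate elliptic from hyperbolic.
\item You cannot import ellipticity either: in the paper it is a corollary of this very theorem.
\end{itemize}

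The missing idea is an explicit inverse.
\begin{itemize}
\item For $u(0,z)\in\cE\setminus\cC$, put $\alpha=z-z^{q^3}$. It is nonzero because $z\notin\F_{q^3}$.
\item The defining equation $z+z^q=z^{q^3}+z^{q^4}$ of $W$ gives $\alpha^q=-\alpha$. Hence $z^{q^4}=z^q+\alpha$, $z^{q^3}=z-\alpha$ and $z^{q^5}=z^{q^2}-\alpha$.
\item It follows that $z\,z^{q^2}z^{q^4}-z^q z^{q^3}z^{q^5}=\alpha\Phi(z)=0$.
\item So $b=z^{q^5}/z^{q^2}$ satisfies $b^{q^2-q+1}=1$ and $b\neq1$.
\item Using $\Phi(z)^q=0$, one finds $F(b-1)=-\lambda z$ with $\lambda=\alpha^2/(z\,z^{q^2}z^{q^4})\in\F_q^*$. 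Thus $u(0,z)$ is the image of $u(0,b-1)\in\cY$.
\end{itemize}
Without this, or some independent proof that $\cE$ contains no line, part ii) is not established.
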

\begin{proof}
Let $u(0, z) \in W$ be the projection of a point of $\cY$ from $\PG(V)$. Then 
\begin{align*}
& z = F(b-1) = F(b) -1 = b^{q^2}-b^q+b-1, 
\end{align*}
where $b \in \F_{q^6}$, with $b^{q^2-q+1} = 1$, $b \ne 1$. By using the fact that $b^{q^2} = b^{q-1}$, we get 
\begin{align*}
& z = F(b-1) = \frac{(b-b^q)(b-1)}{b}, & z^q = F(b-1)^q = \frac{(b-1)(b^q-1)}{b}, & \\
& z^{q^2} = F(b-1)^{q^2} = \frac{(b^q-1)(b^q-b)}{b^{q+1}}, & z^{q^3} = F(b-1)^{q^3} = \frac{(b-b^q)(b-1)}{b^{q+1}}. & 
\end{align*}
An easy calculation shows that $\Phi(z) = 0$ and hence the point $u(0, z)$ belongs to the quadric $\cE$. On the other hand the point does not lie on $\cC$, otherwise $z \in \F_{q^3}$, that is
\begin{align*}
0 & = z^{q^3} - z \\
& = F(b-1)^{q^3} - F(b-1) \\
& = \frac{(1-b^q)(b-b^q)(b-1)}{b^{q+1}},
\end{align*}
contradicting the fact that $b \ne 1$. Moreover, given $z = F(b-1)$, then $b$ is uniquely determined as 
\begin{align*}
\left( \frac{z}{z^{q^3}} \right)^{q^5} = \left( \frac{F(b-1)}{F(b-1)^{q^3}} \right)^{q^5} = \left(b^q\right)^{q^5} = b.
\end{align*}
Hence the projection of a point of $\cY$ from $\PG(V)$ is injective. Next we show it is also surjective. Let $u(0, z)$ be a point of $\cE \setminus \cC$. Set $\alpha = z - z^{q^3}$. Then 
\begin{align*}
\alpha^q = z^q - z^{q^4} = z^{q^3} - z = - \alpha,
\end{align*}
by Lemma~\ref{lemma2}. Furthermore $z^{q^4} = z^q + \alpha$, $z^{q^3} = z - \alpha$, $z^{q^5} = z^{q^2} - \alpha$. Therefore
\begin{align*}
z \, z^{q^2} \, z^{q^4} - z^q \, z^{q^3} \, z^{q^5} & = z \, z^{q^2} \left( z^q + \alpha \right) - z^q \left( z - \alpha \right) \left( z^{q^2} - \alpha \right) & \\
& = \alpha \left( z^{q^2} \, z + z^q \, z + z^{q^2} \, z^q - z^q \, \alpha \right) & \\
& = \alpha \left( z^{q^2} \, z + z^{q^2} \, z^{q} + z^{q} \, z^{q^3}  \right) & \\
& = 0. & 
\end{align*}
Define $b = \frac{z^{q^5}}{z^{q^2}}$. Then 
\begin{align*}
b^{q^2-q+1} = \frac{z^q \, z^{q^3} \, z^{q^5}}{z \, z^{q^2} \, z^{q^4}} = 1.
\end{align*} 
Since $z^{q} = z^{q^4} - \alpha$, $z = z^{q^3} + \alpha$, $z^{q^5} = z^{q^2} - \alpha$ and $\Phi(z)^q = 0$, we get
\begin{align*}
F(b-1) & = F(b) - 1 = \frac{z^q}{z^{q^4}} - \frac{z}{z^{q^3}} + \frac{z^{q^5}}{z^{q^2}} - 1 & \\
 & = -\alpha \left(\frac{1}{z^{q^2}} + \frac{1}{z^{q^3}} + \frac{1}{z^{q^4}} \right) & \\
 & = -\alpha \frac{z^{q^3} \, z^{q^4} + z^{q^3} \, z^{q^2} + z^{q^2} \, z^{q^4} }{z^{q^2} \, z^{q^3} \, z^{q^4}} & \\
 & = -\alpha \frac{z^{q^3} \left( z^{q^4} - z^{q} \right)}{z^{q^2} \, z^{q^3} \, z^{q^4}} & \\
 & = -\frac{\alpha^2}{z \, z^{q^2} \, z^{q^4}} z,
\end{align*}
where $\frac{\alpha^2}{z \, z^{q^2} \, z^{q^4}} \in \F_q$. Indeed, 
\begin{align*}
\left( \frac{\alpha^2}{z \, z^{q^2} \, z^{q^4}} \right)^q = \frac{(-\alpha)^2}{z^q \, z^{q^3} \, z^{q^5}} = \frac{\alpha^2}{z \, z^{q^2} \, z^{q^4}}.
\end{align*}
Hence $u(0, F(b-1))$ and $u(0, z)$ represent the same point of $\PG(W)$. The proof is now complete.
\end{proof} 
 
\begin{cor}
$\cE$ is an elliptic quadric of $\PG(W) \simeq \PG(3, q)$.
\end{cor}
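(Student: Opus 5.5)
The plan is to prove the statement by counting. We know $\cE$ is a quadric of $\PG(W) \simeq \PG(3,q)$. By \cite[Theorem 7.16]{H1}, its section by the plane $\pi$ is the non-degenerate conic $\cC$. Since $\cE$ contains a non-degenerate conic, it is not a pair of planes, a plane, a line or a point. The remaining possibilities are the elliptic quadric $\cQ^-(3,q)$ with $q^2+1$ points, the quadratic cone with $q^2+q+1$ points, and the hyperbolic quadric $\cQ^+(3,q)$ with $(q+1)^2$ points.

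By Theorem~\ref{th5}, the projection from $\PG(V)$ gives a bijection between $\cY$ and $\cE \setminus \cC$. Hence
\begin{align*}
|\cE| = |\cY| + |\cC| = (q^2-q) + (q+1) = q^2+1.
\end{align*}
We used that $\cY$ has size $q^2-q$. This holds because the points $u(0,b-1)$ with $b^{q^2-q+1}=1$, $b\ne 1$, are pairwise distinct projective points. By Lemma~\ref{lemma2}(i) none of them lies on $\PG(V)$, so the projection is defined everywhere on $\cY$.

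Comparing with the three possible sizes, $q^2+1$ differs from $q^2+q+1$ and from $q^2+2q+1$ for every $q\ge 2$. Therefore $\cE$ is an elliptic quadric.

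The only delicate point is to make sure the classification list is exhaustive. We must exclude degenerate quadrics that contain a non-degenerate conic. A cone over a conic is the only such degenerate quadric in $\PG(3,q)$, and its size is $q^2+q+1$, so the count rules it out. No further computation is needed.
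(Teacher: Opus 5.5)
Your argument is correct and is the one the paper leaves implicit after Theorem~\ref{th5}: the bijection gives $|\cE\setminus\cC| = |\cY| = q^2-q$, so $|\cE| = q^2+1$. Among the quadrics of $\PG(3,q)$ whose section by $\pi$ is a non-degenerate conic, only $\cQ^-(3,q)$ has this size.

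One small wording fix. A plane pair or a repeated plane does contain non-degenerate conics as subsets, so you cannot exclude them merely because $\cE$ contains a conic. Exclude them instead because $\cE\cap\pi$ \emph{equals} the conic $\cC$, which forces rank at least $3$. Alternatively, note that $q^2+1$ also differs from their sizes $2q^2+q+1$ and $q^2+q+1$.
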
 

Recall that $\cY$ is a $(3, 2)$-set of $\Pi$. Hence a plane of $\Pi$ has at most three points in common with $\cY$. It follows from Theorem~\ref{th5} that no point of $\PG(V)$ lies on a line of $\Pi$ that is secant to $\cY$. On the other hand, no point of $\PG(V)$ lies on a plane that has three points in common with $\cY$, otherwise by projecting this plane, one gets a line of $\PG(W)$ with three points in common with $\cE \setminus \cC$, which is not the case. Therefore, by selecting two distinct points $P_1, P_2$ of $\PG(V)$ we obtain that $\cY \cup \{P_1, P_2\}$ is also a $(3, 2)$-set of $\Pi$. 

\begin{theorem}\label{th6}
$\cY \cup \{P_1, P_2\}$ is a $4$-general set of $\Pi \simeq \PG(5, q)$, $q \ge 4$, of size $q^2-q+2$.
\end{theorem}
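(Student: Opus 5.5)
We need to show that every plane of $\Pi$ meets $\cX' = \cY \cup \{P_1, P_2\}$ in at most three points. Equivalently, any four points of $\cX'$ span a solid. The size count is immediate: $\cY$ has $q^2-q$ points, and $P_1, P_2 \in \PG(V)$ are not in $\cY$ by Lemma~\ref{lemma2} i). Spanning the whole of $\Pi$ and the existence of a solid with five points are needed for the formal definition of an $(r,s)$-set. These will follow from the corresponding properties of $\cY$, which is already a $(3,2)$-set of $\Pi$ by Theorem~\ref{th3}.

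I split according to how many of $P_1, P_2$ lie on a given plane $\tau$ of $\Pi$.

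\textbf{No $P_i$ on $\tau$.} Then $|\tau \cap \cX'| = |\tau \cap \cY| \le 3$, since $\cY$ is a $4$-general set.

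\textbf{Exactly one $P_i$ on $\tau$.} Suppose $\tau$ contained four points of $\cX'$, say $P_1$ and three points $A,B,C$ of $\cY$. Project from the line $\PG(V)$. The plane $\tau$ meets $\PG(V)$ in at least $P_1$, so its image is a point or a line of $\PG(W)$.
\begin{itemize}
\item If $\PG(V) \subset \tau$, the image is a single point. Then $A,B,C$ would have the same image, contradicting the injectivity in Theorem~\ref{th5} ii).
\item If $\tau \cap \PG(V) = \{P_1\}$, the image is a line of $\PG(W)$. By Theorem~\ref{th5} it contains the three distinct images of $A,B,C$, all on $\cE$. A line meeting a quadric in three points lies on it, but an elliptic quadric contains no lines. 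This is a contradiction.
\end{itemize}
This is exactly the observation already recorded before the statement.

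\textbf{Both $P_1, P_2$ on $\tau$.} Then $\PG(V) = P_1P_2 \subset \tau$. If $\tau$ contained two points $A,B$ of $\cY$, they would have the same image, namely the point $\tau/\PG(V)$. This again contradicts the injectivity of Theorem~\ref{th5} ii). So $|\tau \cap \cX'| \le 3$.

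The only substantive step is the line case of the one-point situation. It relies on $\cE$ being elliptic, which is the Corollary following Theorem~\ref{th5}, together with the bijectivity of the projection. Both are available, so I expect no serious obstacle.

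For the remaining condition of the definition, I take five points of $\cY$ lying in a solid; these exist because $\cY$ is a $(3,2)$-set. Alternatively, any four points of $\cY$ together with $P_1$ span at most a solid, and that solid contains five points of $\cX'$.
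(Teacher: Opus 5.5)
Your proof is correct and is essentially the paper's argument: its two observations (no point of $\PG(V)$ lies on a secant line of $\cY$, by injectivity of the projection, and no point of $\PG(V)$ lies on a plane meeting $\cY$ in three points, since the image would be a line of $\PG(W)$ meeting the elliptic quadric $\cE$ in three points) are exactly your cases. One small slip: your \emph{alternative} justification of condition (iii) fails, because four points of $\cY$ already span a solid and adding $P_1$ generally gives a $4$-space. Your first justification, a solid containing five points of $\cY$ (which exists since $\cY$ is a $(3,2)$-set), suffices.
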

 
\section{A construction of $4$-general sets in $\PG(13, q)$}\label{pg13}

With the same notation used in Section~\ref{pg5}, let $\langle \ell, \Pi_1, \Pi_2 \rangle_q = \PG(U_3) \simeq \PG(13, q)$ where
\begin{align*}
& U_3 = \left\{ v(a,b,c)=\left(a, a^q, b, b^q, b^{q^2}, b^{q^3}, b^{q^4}, b^{q^5}, c, c^{q}, c^{q^2}, c^{q^3}, c^{q^4}, c^{q^5} \right) : a \in \F_{q^2}, b, c \in \F_{q^6} \right\}.
\end{align*}
With a slight abuse of notation we denote by $v(a,b,c)$, for $(a,b,c)\neq (0,0,0)$, the point of $\PG(U_3)$ defined by the vector $v(a,b,c)$. The $5$-dimensional projective subspace consisting of the points $v(0,b,0)$, $b \in \F_{q^6} \setminus \{0\}$, (resp. $v(0,0,c)$, $c \in \F_{q^6} \setminus \{0\}$) is $\Pi_1$ (resp. $\Pi_2$), whereas the line formed by the points $v(a,0,0)$, $a \in \F_{q^2} \setminus \{0\}$ is $\ell$. 
Recall that $\phi_3$ is the projectivity of $\PG(U_3)$ induced by \eqref{phi3}. 

\begin{lemma} \label{lemma3}
\begin{enumerate}
    \item[(i)] $\langle \phi_3 \rangle$ is a group of order $\frac{q^6-1}{q-1}$.
    \item[(ii)] The group $\langle \phi_3 \rangle$ has a semiregular action on points of $\PG(U_3) \setminus \left( \Pi_1 \cup \Pi_2 \cup \ell \right)$. 
\end{enumerate}
\end{lemma}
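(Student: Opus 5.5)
The plan is to work directly with the explicit description \eqref{phi3}: $\phi_3$ is induced by the $\F_q$-linear map $v(a,b,c)\mapsto v(\lambda a,\mu b,\nu c)$, where
\[
\lambda=\w^{q^4+q^2+1},\qquad \mu=\w^{q^3+q+1},\qquad \nu=\w^{q^4+q+1} .
\]
Note that $\lambda\in\F_{q^2}$, since $(q^4+q^2+1)(q^2-1)=q^6-1$. Also $\phi_3^k$ is the identity projectivity exactly when there is $\rho\in\F_q^*$ with $\lambda^k a=\rho a$, $\mu^k b=\rho b$ and $\nu^k c=\rho c$ for all admissible $a,b,c$. Equivalently, $\lambda^k=\mu^k=\nu^k\in\F_q^*$. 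So for (i) I will compute the least $k$ for which this holds.

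\textbf{Part (i).} Write $N=\frac{q^6-1}{q-1}$. Since $\w$ has order $q^6-1$, we have $\mu^k\in\F_q^*$ if and only if $N\mid k(q^3+q+1)$, and similarly for the other two elements. The exponents of $\lambda,\mu,\nu$ differ by $q^3-q^2=q^2(q-1)$, $q^4-q^3=q^3(q-1)$ and $q^4-q^2=q^2(q^2-1)$. Hence $\mu^k=\nu^k$ if and only if $(q^6-1)\mid kq^3(q-1)$, i.e.\ $N\mid k$. Once $N\mid k$, we get $\w^{k(\cdot)}\in\F_q^*$ automatically, and the three values coincide because each pairwise exponent difference is divisible by $(q-1)$ and so contributes a factor $\w^{N(q-1)\cdot}=1$. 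Therefore the order is exactly $N$, and this needs only the single gcd observation $\gcd(q^3,N)=1$.

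\textbf{Part (ii).} Let $P=v(a,b,c)$ be a point not in $\Pi_1\cup\Pi_2\cup\ell$. Then at least two of $a,b,c$ are nonzero. Suppose $\phi_3^k$ fixes $P$. Then $\lambda^k a=\rho a$, $\mu^k b=\rho b$, $\nu^k c=\rho c$ for some $\rho\in\F_q^*$, so two of $\lambda^k,\mu^k,\nu^k$ are equal and lie in $\F_q^*$. I will split into three cases according to which pair of coordinates is nonzero:
\begin{itemize}
\item If $b,c\ne 0$, the computation above gives $N\mid k$.
\item If $a,b\ne 0$, we get $\lambda^k=\mu^k\in\F_q^*$. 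The equality forces $(q^6-1)\mid kq^2(q-1)$, hence $N\mid k$.
\item If $a,c\ne 0$, the difference exponent is $q^2(q^2-1)$. This only gives $\frac{q^6-1}{q^2-1}=q^4+q^2+1$ dividing $k$, so I must also use the condition $\nu^k\in\F_q^*$, i.e.\ $N\mid k(q^4+q+1)$.
\end{itemize}
In the third case, write $k=m(q^4+q^2+1)$. The condition $\nu^k\in\F_q^*$ then reduces to $(q+1)\mid m(q^4+q+1)$. Since $q^4+q+1\equiv 1-1+1=1 \pmod{q+1}$, this forces $(q+1)\mid m$, hence $N\mid k$.

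So in every case the stabilizer of $P$ in $\langle\phi_3\rangle$ is trivial, which proves semiregularity. The main obstacle is exactly this $(a,c)$ case, where the pairwise-equality condition alone is not enough and the extra $\F_q$-scalar condition with the congruence modulo $q+1$ is needed. I expect all remaining checks to be routine gcd computations using $\gcd(q,N)=1$.
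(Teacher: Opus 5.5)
Your part (i) is fine. The only exponent difference you actually use there, $(q^4+q+1)-(q^3+q+1)=q^3(q-1)$, is correct. The closing check only needs every pairwise difference to be divisible by $q-1$, which still holds.

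Part (ii) has a genuine gap. Two of your exponent differences are wrong: they both involve $\lambda=\w^{q^4+q^2+1}$, and your values are what one gets from the exponent $q^2+q+1$ instead. The correct differences are $(q^4+q^2+1)-(q^3+q+1)=q(q-1)(q^2+1)$ and $(q^4+q^2+1)-(q^4+q+1)=q(q-1)$. With these, your case $a,b\neq 0$ fails as written. Put $N=\frac{q^6-1}{q-1}=(q+1)(q^4+q^2+1)$. Then $\lambda^k=\mu^k$ is equivalent to $N\mid k(q^2+1)$. Since $\gcd(q^4+q^2+1,q^2+1)=1$ and $\gcd(q+1,q^2+1)=\gcd(q+1,2)$, for odd $q$ this gives only $\frac{N}{2}\mid k$. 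Indeed, for odd $q$ one has $\lambda^{N/2}=\mu^{N/2}$. So your claim ``the equality forces $N\mid k$'' is false, and your case argument does not exclude $k=N/2$ for points $v(a,b,0)$.

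The missing step is to use the scalar condition as well, just as you did in your third case. The condition $\mu^k\in\F_q^*$ means $N\mid k(q^3+q+1)$. Since $q^3+q+1=q(q^2+1)+1$ is coprime to $q^2+1$, the two conditions together give $N\mid k$. Equivalently: $q^3+q+1$ is odd, so $k=mN/2$ forces $m$ even.

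The case $a,c\neq 0$, which you flagged as the main obstacle, is in fact immediate: $\lambda^k=\nu^k$ holds iff $(q^6-1)\mid kq(q-1)$, iff $N\mid k$. This is precisely the relation the paper uses for (i). Your argument modulo $q+1$ in that case reaches a true conclusion, but from a false premise.

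Once repaired, your stabilizer-by-pairs argument is a valid and somewhat cleaner alternative to the paper's. The paper instead normalizes $b$ or $c$ to orbit representatives, using $\gcd(q^3+q+1,N)=\gcd(q^4+q+1,N)\in\{1,3\}$. It therefore has to split according to whether $q\equiv 1\pmod{3}$ and treat $\phi_3^{N/3}$ separately. In return, the paper's route shows that for $q\not\equiv 1\pmod{3}$ the action is even regular on $\Pi_1$ and on $\Pi_2$.
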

\begin{proof}
$(i)$ Consider the projectivity $\phi_3^i$ associated with the linear map given by $v(a, b, c) \in U_3 \mapsto v\left(\w^{i(q^4+q^2+1)} a, \w^{i(q^3+q+1)} b, \w^{i(q^4+q+1)} c\right) \in U_3$. If $(a, b, c) \ne (0, 0, 0)$, then $\phi_3^i$ is the identity if and only if $\w^{(q-q^2)i} = 1$ and hence $\frac{q^6-1}{q-1} \mid i$, since $\gcd(q^6-1, q) = 1$. 

$(ii)$ Observe preliminarily that   
\begin{align*}
\gcd\left(q^3+q+1, \frac{q^6-1}{q-1}\right) = \gcd\left(q^4+q+1, \frac{q^6-1}{q-1}\right) = 
\begin{cases}
1 & \mbox{ if } q \not\equiv 1 \pmod{3}, \\
3 & \mbox{ if } q \equiv 1 \pmod{3}.
\end{cases}
\end{align*}
Assume first that $q \not\equiv 1 \pmod{3}$. It follows that the point $v(a',b',c')$ belongs to the orbit with representatives  
\begin{align*}
& v(a,1,c) & \mbox{ if } b' \ne 0, \\
& v(a,0,1) & \mbox{ if } b' = 0, c' \ne 0, \\    
& v(1,0,0) & \mbox{ if } b' = c' = 0.
\end{align*}
Moreover these representatives identify a unique point orbit of $\langle \phi_3 \rangle$ on $\PG(U_3)$. Indeed, assume that $v(a, 1, c)$ and $v(d,1,e)$ are in the same orbit. Then there exists $\lambda \in \F_q \setminus \{0\}$ such that
\begin{align*}
v(d,1,e) = \lambda v\left(\w^{i(q^4+q^2+1)} a, \w^{i(q^3+q+1)}, \w^{i(q^4+q+1)} c\right),
\end{align*} 
for some $i = 0, \dots, q^6-1$. In particular, $\w^{i(q^3+q+1)} \in \F_q \setminus \{0\}$, that is $\frac{q^6-1}{q-1} \mid i$ and $v(a, 1, c)$, $v(d,1,e)$ represent the same point. A similar argument holds for the remaining representatives. 

In the case when $q \equiv 1 \pmod{3}$, we have that the point $v(a',b',c')$ can be mapped to one of the following three points:
\begin{align*}
& v(a,1,c), v(a,\w,c),v(a,\w^2,c)  & \mbox{ if } b' \ne 0, \\
& v(a,0,1), v(a,0,\w), v(a,0,\w^2) & \mbox{ if } b' = 0, c' \ne 0,  
\end{align*}
whereas, as in the previous case, $v(a',0,0)$ and $v(1,0,0)$ belong to the same orbit. However these representatives identify a unique point orbit only if the point belongs to $\ell \cup \Pi_1 \cup \Pi_2$. Indeed, the projectivity induced by the map $\phi_3^{\frac{q^6-1}{3(q-1)}}$ sends the point $v(a, \w^r, c)$ to the point $v\left(a \w^{\frac{(q^6-1)(q^3+q)}{3}}, \w^r, c \w^{\frac{(q^6-1)q^3}{3}}\right)$ and the point $v(a,0,\w^r)$ to the point $v\left(a \w^{\frac{(q^6-1)(q^3+q)}{3}}, 0, \w^r\right)$, where $ \w^{\frac{q^6-1}{3}}$ is a root of $X^2+X+1$ and lies in $\F_q$. Therefore, if $\xi^2+\xi+1 = 0$, for a fixed $r \in \{0,1,2\}$, the points
\begin{align*}
& v(a, \w^r, c), v(\xi^2 a, \w^r, \xi c), v(\xi a, \w^r, \xi^2 c), \\
& v(a,0,\w^r), v(\xi^2 a,0,\w^r), v(\xi a,0,\w^r),
\end{align*}
are in the same orbits.
\end{proof}

Let us define the following set of points of $\PG(U_3)$:
\begin{align*}
\cV = \left\{ v\left(x^{q^4+q^2+1}, x^{q^3+q+1}, x^{q^4+q+1} \right) : x \in \F_{q^{6}} \setminus \{0\}\right\}, 
\end{align*}
i.e., $\cV$ is the $\langle \phi_3 \rangle$-orbit of $v(1,1,1)$. 

\begin{prop} 
$\cV$ is a $4$-general set of $\PG(U_3) \simeq \PG(13, q)$.
\end{prop}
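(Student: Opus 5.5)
Since $\cV$ is a single $\langle \phi_3 \rangle$-orbit, it spans a $\langle\phi_3\rangle$-invariant subspace. The plan is to handle spanning first, then reduce the $4$-general condition to a system of norm-type equations over $\F_{q^6}$, normalised using the transitivity of $\langle \phi_3 \rangle$ on $\cV$.

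\textbf{Spanning.} $\cV$ spans the whole $\PG(U_3)$ because the projections of $\cV$ to $\ell$, $\Pi_1$ and $\Pi_2$ are orbits of Singer-type subgroups acting irreducibly on $\F_{q^2}$, resp. $\F_{q^6}$. An invariant proper subspace containing $v(1,1,1)$ would have to miss one of these components, which is impossible.

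\textbf{Linear-algebra reformulation.} Four points $v(x_i^{e_1},x_i^{e_2},x_i^{e_3})$, $i=1,\dots,4$, with
\begin{align*}
e_1=q^4+q^2+1,\quad e_2=q^3+q+1,\quad e_3=q^4+q+1,
\end{align*}
are dependent if and only if there exist $\alpha_i\in\F_q$, not all zero, such that
\begin{align*}
\sum_{i=1}^4 \alpha_i x_i^{e_1}=0,\qquad \sum_{i=1}^4 \alpha_i x_i^{e_2}=0,\qquad \sum_{i=1}^4 \alpha_i x_i^{e_3}=0 .
\end{align*}
This holds because every coordinate of $v(a,b,c)$ is a Frobenius conjugate of $a$, $b$ or $c$. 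Since $\langle\phi_3\rangle$ acts transitively on $\cV$, I would take $x_1=1$. I must then show that the only solutions have two of the $x_i$ defining the same point, i.e.\ $x_i/x_j\in\F_q^*$.

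\textbf{First attempt: reduction to the Desarguesian ovoid.} Project $\cV$ from $\Pi_2$ onto $\langle \ell,\Pi_1\rangle\simeq\PG(7,q)$, and from $\Pi_1$ onto $\langle\ell,\Pi_2\rangle$. A plane containing four points of $\cV$ projects into a subspace of dimension at most two. So it suffices to show that one of these projections is injective on $\cV$ and has $4$-general image. I would try to identify the image with a projective copy of $\cO_2\subset \PG(U_2)$, which is $4$-general by Lemma~\ref{lemma1}.

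To do this, I look for a substitution $x\mapsto x^{m}$ with $\gcd(m,q^6-1)$ suitable, combined with a Frobenius twist $q^j$ and an $\F_q$-scalar (which shifts every exponent by a common multiple of $\frac{q^6-1}{q-1}$). The goal is to carry $(e_1,e_2)$ or $(e_1,e_3)$ to $(q^4+q^2+1,\,q^2+q+1)$ modulo $q^6-1$. The congruence $e_2 = q(q^2+1)+1$ and the relation $e_3 \equiv q^4 e_2'$-type symmetries suggest checking small cases of $j$ first.

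\textbf{Fallback: direct computation, and the main obstacle.} If no such identification exists, I would argue directly as in the proof of Lemma~\ref{lemma1}. Eliminate $\alpha_1$ using $x_1=1$. Use the first equation, which lives in $\F_{q^2}$, together with the other two and their conjugates to express $x_4^{e_2}$ and $x_4^{e_3}$ linearly in terms of the others. Then take the quotient $x_4^{e_3}/x_4^{e_2}=x_4^{q^4-q^3}$ to get an equation that forces some ratio $x_i/x_j$ into a proper subfield. Cross-ratio type invariance, as in Lemma~\ref{lem:seven-points}, should then conclude, and Lemma~\ref{lemma3}(ii) (semiregularity) would rule out the degenerate coincidences.

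I expect this elimination to be the main obstacle. The case analysis when $q\equiv 1\pmod 3$, where the orbit has the extra cube-root-of-unity symmetry, is where the argument is most likely to need extra care.
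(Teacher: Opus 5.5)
There is a genuine gap: neither route you describe proves that four points of $\cV$ are independent. Your linear-algebra reformulation is correct, and the spanning paragraph is harmless, though not needed for $4$-generality.

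\textbf{The first attempt cannot work, for cardinality reasons.} You require a projection, from $\Pi_2$ onto $\langle\ell,\Pi_1\rangle$ or from $\Pi_1$ onto $\langle\ell,\Pi_2\rangle$, that is injective on $\cV$ and has $4$-general image.
\begin{itemize}
\item By Lemma~\ref{lemma3}, $|\cV|=\frac{q^6-1}{q-1}$, whereas $\cO_2$ has only $q^3+1$ points. So no substitution $x\mapsto x^m$ combined with a Frobenius twist can identify the image with $\cO_2$.
\item Worse, a $4$-general set in a $\PG(7,q)$ has at most about $\sqrt2\,q^3$ points, by the bound quoted at the end of Section~\ref{bb}. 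So no injective $\F_q$-rational projection of $\cV$ into a $7$-dimensional space can have $4$-general image, for any $q$.
\end{itemize}
The fallback is not an argument. It never says why the fourteen $\F_q$-linear conditions on $(\alpha_1,\dots,\alpha_4)$ force $x_i/x_j\in\F_q^*$. The cross-ratio idea and the $q\equiv 1\pmod 3$ worry play no role: $v(1,1,1)\notin\ell\cup\Pi_1\cup\Pi_2$, so its orbit is regular by Lemma~\ref{lemma3} in all cases.

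\textbf{The missing idea.} Your instinct to reduce to a Desarguesian ovoid is right, but the reduction has to go over $\F_{q^2}$, by selecting coordinates over $\F_{q^6}$ rather than by an $\F_q$-rational projection.
\begin{itemize}
\item Compute ranks over $\F_{q^6}$. Divide each vector by $a=x^{q^4+q^2+1}$ and put $y=x^{q-1}$. Points of $\cV$ then correspond bijectively to the $y$ with $y^{\frac{q^6-1}{q-1}}=1$.
\item Keep only the eight coordinates $a,a^q,b^q,b^{q^3},b^{q^5},c^q,c^{q^3},c^{q^5}$. These are closed under the $q^2$-Frobenius but not under the $q$-Frobenius, and they become
\begin{align*}
1,\; y^{q^4+q^2+1},\; y,\; y^{q^2},\; y^{q^4},\; y^{q^4+1},\; y^{q^2+1},\; y^{q^4+q^2}.
\end{align*}
These are the entries of $(1,y)\otimes(1,y^{q^2})\otimes(1,y^{q^4})$. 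That is the Desarguesian ovoid of $\PG(7,q^2)$ attached to $\F_{q^6}/\F_{q^2}$, which has $q^6+1$ points and therefore room for $\cV$.
\item Deleting columns cannot raise the rank. So it suffices that this $4\times 8$ matrix has rank $4$, i.e.\ that four distinct points of the $\F_{q^2}$-ovoid are independent.
\end{itemize}

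The paper proves this last fact directly. It translates $z_i=y_i-1$, which moves $v(1,1,1)$ to $(1,0,\dots,0)$, and splits according to $\dim\langle z_1,z_2,z_3\rangle_{q^2}$:
\begin{itemize}
\item dimension $3$: a Moore determinant is nonzero;
\item dimension $2$: the quadratic expansion yields $\alpha\beta\left(z_1z_2^{q^2}-z_1^{q^2}z_2\right)^{q^4+q^2+1}\neq 0$;
\item dimension $1$: a Vandermonde-type determinant is nonzero.
\end{itemize}
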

\begin{proof}
After normalizing, a point of $\cV$ is represented by:
\begin{align*}
\left(1, y^{q^4+q^2+1}, y^{-q^3-q}, y, y^{-q^5-q^3}, y^{q^2}, y^{-q^5-q}, y^{q^4}, y^{-q}, y^{q^4+1}, y^{-q^3}, y^{q^2+1}, y^{-q^5}, y^{q^4+q^2} \right), 
\end{align*}
where $y \in \F_{q^6}$, with $y^{\frac{q^6-1}{q-1}} = 1$. Let $y_1, y_2, y_3$ be three distinct elements of $\F_{q^6} \setminus \{0, 1\}$, where $y^{\frac{q^6-1}{q-1}} = 1$. 
We claim that $\rk(M) = 4$, where
\begin{align*}
M = \begin{pmatrix}
1 & 1 & 1 & 1 & 1 & 1 & 1 & 1 \\
1 & y_1^{q^4+q^2+1} & y_1 & y_1^{q^2} & y_1^{q^4} & y_1^{q^4+1} & y_1^{q^2+1} & y_1^{q^4+q^2} \\
1 & y_2^{q^4+q^2+1} & y_2 & y_2^{q^2} & y_2^{q^4} & y_2^{q^4+1} & y_2^{q^2+1} & y_2^{q^4+q^2} \\
1 & y_3^{q^4+q^2+1} & y_3 & y_3^{q^2} & y_3^{q^4} & y_3^{q^4+1} & y_3^{q^2+1} & y_3^{q^4+q^2} 
\end{pmatrix}.
\end{align*}
Let $z_i = y_i -1$. Then 
\begin{align*}
& y_i^{q^2} -1= z_i^{q^2}, \quad y_i^{q^4}-1=z_i^{q^4}, \quad y_i^{q^2+1}-1=z_i+z_i^{q^2}+z_i^{q^2+1}, \quad y_i^{q^4+1}-1=z_i+z_i^{q^4}+z_i^{q^4+1}, \\
& y_i^{q^4+q^2}-1=z_i^{q^2}+z_i^{q^4}+z_i^{q^4+q^2}, \quad y_i^{q^4+q^2+1}-1=z_i+z_i^{q^2}+z_i^{q^4}+z_i^{q^2+1}+z_i^{q^4+1}+z_i^{q^4+q^2}+z_i^{q^4+q^2+1}.
\end{align*}
Therefore $\rk(M) = 4$ if and only if $\rk(N) = 3$, where
\begin{align*}
N = \begin{pmatrix}
z_1 & z_1^{q^2} & z_1^{q^4} & z_1^{q^2+1} & z_1^{q^4+1} & z_1^{q^4+q^2} & z_1^{q^4+q^2+1} \\
z_2 & z_2^{q^2} & z_2^{q^4} & z_2^{q^2+1} & z_2^{q^4+1} & z_2^{q^4+q^2} & z_2^{q^4+q^2+1} \\
z_3 & z_3^{q^2} & z_3^{q^4} & z_3^{q^2+1} & z_3^{q^4+1} & z_3^{q^4+q^2} & z_3^{q^4+q^2+1} 
\end{pmatrix}.
\end{align*}
If $\dim\left(\langle z_1, z_2, z_3 \rangle_{q^2}\right) = 3$, then $\rk(N) = 3$. In the case when $\dim\left(\langle z_1, z_2, z_3 \rangle_{q^2}\right) = 2$, we may assume $z_3 = \alpha z_1 + \beta z_2$, where $\alpha, \beta \in \F_{q^2}$, $\alpha \beta \ne 0$, then 
\begin{align*}
& z_3^{q^2+1} = \alpha^2 z_1^{q^2+1} + \beta^2 z_2^{q^2+1} + \alpha \beta \left(z_1 z_2^{q^2} + z_1^{q^2} z_2\right), \\
& z_3^{q^4+1} = \alpha^2 z_1^{q^4+1} + \beta^2 z_2^{q^4+1} + \alpha \beta \left(z_1 z_2^{q^4} + z_1^{q^4} z_2\right), \\
& z_3^{q^2+q^2} = \alpha^2 z_1^{q^4+q^2} + \beta^2 z_2^{q^4+q^2} + \alpha \beta \left(z_1^{q^2} z_2^{q^4} + z_1^{q^4} z_2^{q^2}\right). 
\end{align*}
Therefore $\rk(N) = 3$, since
\begin{align*}
\alpha \beta 
\det 
\begin{pmatrix}
z_1^{q^2+1} & z_1^{q^4+1} & z_1^{q^4+q^2} \\
z_2^{q^2+1} & z_2^{q^4+1} & z_2^{q^4+q^2} \\
z_1 z_2^{q^2} + z_1^{q^2} z_2 & z_1 z_2^{q^4} + z_1^{q^4} z_2 & z_1^{q^2} z_2^{q^4} + z_1^{q^4} z_2^{q^2} 
\end{pmatrix} = \alpha \beta \left(z_1 z_2^{q^2} - z_1^{q^2} z_2\right)^{q^4+q^2+1} \ne 0.
\end{align*}
If $\dim\left(\langle z_1, z_2, z_3 \rangle_{q^2}\right) = 1$, then let $\gamma_i \in \F_{q^2} \setminus \{0, 1\}$, $i=1, 2$, $\gamma_1 \ne \gamma_2$, such that $z_i = \gamma_i z_3$, $i=1, 2$. In this case
\begin{align*}
& z_i^{q^2+1} = \gamma_i^2 z_3^{q^2+1}, \quad z_i^{q^4+q^2+1} = \gamma_i^3 z_3^{q^4+q^2+1}, & i = 1,2 .
\end{align*}
Hence
\begin{align*}
\det 
\begin{pmatrix}
\gamma_1 z_3 & \gamma_1^2 z_3^{q^2+1} & \gamma_1^3 z_3^{q^4+q^2+1} \\
\gamma_2 z_3 & \gamma_2^2 z_3^{q^2+1} & \gamma_2^3 z_3^{q^4+q^2+1} \\
z_3 & z_3^{q^2+1} & z_3^{q^4+q^2+1} 
\end{pmatrix} = z_3^{q^4+2q^2+3} 
\det\begin{pmatrix}
\gamma_1 & \gamma_1^2 & \gamma_1^3 \\
\gamma_2 & \gamma_2^2 & \gamma_2^3 \\
1 & 1 & 1
\end{pmatrix} = \gamma_1 \gamma_2 (\gamma_2 - \gamma_1)(1 - \gamma_1)(1 - \gamma_2) \ne 0
\end{align*}
and $\rk(N) = 3$. Therefore no four points of $\cV$ can be contained in a plane.
\end{proof}

\begin{theorem}\label{th} 
In $\PG(13, q)$ there exists a transitive $(3, 2)$-set of size $\frac{q^6-1}{q-1}$.
\end{theorem}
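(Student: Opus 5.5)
The proof assembles $\cV$'s properties against the definition of a $(3,2)$-set: conditions i)--iii), plus the size and transitivity claims. \emph{Transitivity} is immediate, since $\cV$ is by definition the $\langle \phi_3 \rangle$-orbit of $v(1,1,1)$. \emph{Size}: the point $v(1,1,1)$ has nonzero $b$- and $c$-coordinates, so it lies outside $\Pi_1 \cup \Pi_2 \cup \ell$. By Lemma~\ref{lemma3}(ii) the stabiliser of $v(1,1,1)$ in $\langle\phi_3\rangle$ is trivial. By Lemma~\ref{lemma3}(i) we then get $|\cV| = |\langle \phi_3\rangle| = \frac{q^6-1}{q-1}$. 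Condition i) is exactly the previous Proposition: no four points of $\cV$ lie in a plane.

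Condition ii) requires that $\cV$ spans $\PG(U_3)$. I would argue via invariance. The span $\langle \cV \rangle$ is a $\phi_3$-invariant subspace containing $v(1,1,1)$. Over $\F_{q^6}$, the linear map inducing $\phi_3$ is diagonal in the $14$ coordinates. Its eigenvalues are the $\w^{q^j(q^4+q^2+1)}$ ($j=0,1$), $\w^{q^j(q^3+q+1)}$ and $\w^{q^j(q^4+q+1)}$ ($j=0,\dots,5$), up to the common scalar.
\begin{itemize}
\item If these fourteen eigenvalues are pairwise distinct, any invariant subspace is spanned by coordinate vectors. Since $v(1,1,1)$ has all coordinates nonzero, its $\phi_3$-cyclic span (a Vandermonde argument over $\F_{q^6}$) is everything, and this descends to $\F_q$.
\item Distinctness amounts to checking that the exponents $q^j(q^4+q^2+1)$, $q^j(q^3+q+1)$, $q^j(q^4+q+1)$ are pairwise incongruent mod $q^6-1$. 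This is a short check. Alternatively, one can directly exhibit $14$ independent points of $\cV$, but the eigenvalue route is cleaner.
\end{itemize}

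Condition iii) asks for a solid containing $5$ points of $\cV$. I expect this to be the main obstacle, and I would settle it by counting, in the spirit of Remark~\ref{oss}. Suppose no solid contains five points of $\cV$. Then $\cV$ is a $5$-general set in $\PG(13,q)$. By Proposition~\ref{upper2} with $n=13$ this forces
\[
|\cV| \le \sqrt{2}\, q^{11/2} + O(q^{9/2}),
\]
which contradicts $|\cV| \sim q^5$ only if $q^5 > \sqrt2 q^{11/2}$. That inequality is false, so this plain count fails.

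Instead I would project from three points of $\cV$ spanning a plane $\tau$ onto a quotient $\PG(10,q)$. If no solid through $\tau$ contained two further points of $\cV$, the projected set of size $|\cV|-3$ would have all its points distinct. Counting the number of points of $\PG(10,q)$ then gives $|\cV|-3 \le \frac{q^{11}-1}{q-1}$, which is also too weak.

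The better count uses $5$-subsets. If $\cV$ were $5$-general, every $4$-subset would span a solid, and distinct $4$-subsets would span distinct solids (two $4$-subsets in one solid give at least five points). Also, any $3$ of the $4$ points determine a plane, so through each plane meeting $\cV$ in three points the solids containing a fourth point of $\cV$ are pairwise distinct. There are $|\cV|-3 \approx q^5$ of them, while the number of solids through a plane in $\PG(13,q)$ is $\approx q^{10}$, which again gives no contradiction.

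So, failing a counting argument, I would exhibit five points explicitly. I would use the structure $z_i=\gamma_i z_3$ with $\gamma_i \in \F_{q^2}$ from the proof of the Proposition. Take $y$ with $y-1$ ranging over $\F_{q^2}$-multiples of a fixed $z_3$ that satisfy the norm condition on $y$. Then the matrix $N$ has rank at most $4$ in the relevant columns, because the coordinates become polynomials of degree at most $3$ in $\gamma$ times fixed vectors, giving a twisted-cubic-like curve. Choosing four such nonzero $\gamma$'s plus $y=1$ gives five points in a solid. The delicate part is showing that at least four such admissible $\gamma$'s exist, and I would check this by solving the norm equation $y^{(q^6-1)/(q-1)}=1$ restricted to the line $1+\F_{q^2}z_3$.
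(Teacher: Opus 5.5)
Your treatment of transitivity, of the size (via Lemma~\ref{lemma3}, since $v(1,1,1)\notin\Pi_1\cup\Pi_2\cup\ell$), and of condition i) (via the Proposition) is exactly the paper's argument. The paper's proof of this theorem is nothing more than Lemma~\ref{lemma3} plus the Proposition, and it never checks conditions ii) and iii) of the definition.

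Your spanning argument for ii) is correct and goes beyond the paper. The fourteen exponents are congruent modulo $q^6-1$ to the integers $q^i+q^j+q^k$ for fourteen distinct $3$-subsets $\{i,j,k\}\subset\{0,\dots,5\}$, all below $q^6-1$. Hence they are pairwise incongruent, and the Vandermonde matrix of the vectors $\phi_3^k(v(1,1,1))$, $0\le k\le 13$, is invertible. The descent to $\F_q$ is valid because $U_3$ is an $\F_q$-form of $\F_{q^6}^{14}$.

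The gap is condition iii). You leave it open, and the explicit construction you sketch does not work as stated. The eight columns of $M$ (seven of $N$) are only part of the coordinates. The Proposition could discard the other six because it was proving a \emph{lower} bound on a rank. To put five points in a solid you need an \emph{upper} bound on the rank of the full $14$-coordinate vectors. The remaining coordinates are $y^{-q}, y^{-q^3}, y^{-q^5}, y^{-q-q^3}, y^{-q^3-q^5}, y^{-q-q^5}$, which involve odd Frobenius powers of $y$. For $y=1+\gamma z_3$ with $\gamma\in\F_{q^2}$ they depend on $\gamma^{q}$ as well as $\gamma$. So your degree count, which bounds the rank of the columns of $M$ by $4$, gives no bound for the full vectors.

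Your idea can be repaired by specialising to $z_3\in\F_{q^2}$, more precisely to the points $v(x^{q^4+q^2+1},x^{q^3+q+1},x^{q^4+q+1})$ with $x\in\F_{q^2}^*$. There are $q+1$ of these, and they are distinct by Lemma~\ref{lemma3}, since $x$ and $\lambda x$, $\lambda\in\F_q^*$, give the same point. Every coordinate is $x^{q^i+q^j+q^k}$, and modulo $q^2-1$ the exponent reduces to $3$, $2+q$, $1+2q$ or $3q$ according to how many of $i,j,k$ are odd. Hence these vectors lie in an $\F_{q^6}$-subspace of dimension $4$. By the same descent, the $q+1$ points lie in a solid.

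For $q\ge 4$ this gives at least five points of $\cV$ in a solid and settles iii). For $q\in\{2,3\}$ it yields only $q+1\le 4$ points, so iii) would need a separate argument there. Neither your proposal nor the paper supplies one.
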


\section{Conclusion}

Here we have presented constructions of the following:
\begin{enumerate}
\item[{\em i)}] a $(4, 3)$-set and a $(6, 4)$-set in $\AG(6, q)$, $q \ge 4$, of size $q^2-q+1$;
\item[{\em ii)}] a $(3, 2)$-set and a $(5, 3)$-set in $\AG(5, q)$, $q \ge 4$, of size $q^2-q$;
\item[{\em iii)}] a $(3, 2)$-set in $\PG(5, q)$, $q \ge 4$, of size $q^2-q+2$;
\item[{\em iv)}] a $(3, 2)$-set in $\PG(13, q)$ of size $\frac{q^6-1}{q-1}$.
\end{enumerate}
Except for {\em iv)}, all these examples have cardinalities matching the theoretical upper bound up to a constant factor. An $(r, s)$-set in $\PG(n, q)$ is said to be complete if it is not contained in a larger $(r, s)$-set. It would be interesting to determine whether the constructed sets are complete or not.   

\bigskip
{\footnotesize
\noindent\textit{Acknowledgments.}
The research was supported by the Italian National Group for Algebraic and Geometric Structures and their Applications (GNSAGA--INdAM).}

\end{document}